\documentclass[notitlepage,twoside,a4paper]{amsart}
\usepackage{mathrsfs}
\usepackage{amsmath,amssymb,enumerate}
\usepackage{epsfig,fancyhdr,color}
\usepackage{epstopdf}
\usepackage{amssymb}
\usepackage{amsmath,amsthm}
\usepackage{latexsym}
\usepackage{amscd}
\usepackage{psfrag}
\usepackage{graphicx}
\usepackage{epsf}
\usepackage[latin1]{inputenc}
\usepackage[all]{xy}
\usepackage{tikz}
\usepackage{prettyref}
\usepackage{subfigure}
\usepackage{float}
\usepackage{color}
\usepackage{array}
\usepackage{cite}
\usepackage[totalwidth=15cm,totalheight=24cm]{geometry}

\newcommand{\II}{I\hspace{-0.1cm}I}
\newcommand{\III}{I\hspace{-0.1cm}I\hspace{-0.1cm}I}


\newtheorem{theorem}{\rm\bf Theorem}[section]
\newtheorem{proposition}[theorem]{\rm\bf Proposition}

\newtheorem{lemma}[theorem]{\rm\bf Lemma}
\newtheorem{corollary}[theorem]{\rm\bf Corollary}
\newtheorem{definition}[theorem]{\rm\bf Definition}
\newtheorem{remark}[theorem]{\rm\bf Remark}

\newtheorem{question}[theorem]{\rm\bf Question}
\newtheorem{claim}[theorem]{\rm\bf Claim}

\newtheoremstyle{named}{}{}{\itshape}{}{\bfseries}{.}{.5em}{#1 \thmnote{#3}}
\theoremstyle{named}

\newcommand{\HH}{{\mathbb H}}
\newcommand{\R}{{\mathbb R}}
\newcommand{\RP}{{\mathbb {RP}}}

\newcommand{\cL}{{\mathcal L}}

\newcommand{\cT}{{\mathcal T}}

\newcommand{\AdS}{\mathbb{A}\rm{d}\mathbb{S}}

\newcommand{\Isom}{\rm{Isom}}
\newcommand{\PSL}{\rm{PSL}}

\newcommand{\tr}{{\rm tr}}

\newcommand{\QF}{\mathcal{QF}_{AdS}(S)}
\newcommand{\cV}{\mathcal{V}}
\newcommand{\cI}{\mathcal{I}}

\newcounter{notes}%

\def\interieur#1{\mathord{\mathop{\kern 0pt #1}\limits^\circ}}

\title[]{Convex surfaces with prescribed induced metrics in anti-de Sitter spacetimes}

\author{Qiyu Chen}
\address{Qiyu Chen:
School of Mathematics, South China University of Technology, 510641, Guangzhou, P. R. China.}
\email{qiyuchen@scut.edu.cn}
\thanks{Q. Chen was partially supported by NSFC No: 12101244, Guangdong Basic and Applied Basic Research Foundation No: 2022A1515012225, and Guangzhou Science and Technology Program No: 202201010464.}

\author{Jean-Marc Schlenker}
\address{Jean-Marc Schlenker:
University of Luxembourg, FSTM, Department of Mathematics, Maison du nombre, 6 avenue de la Fonte,
L-4364 Esch-sur-Alzette, Luxembourg}
\email{jean-marc.schlenker@uni.lu}
\thanks{J.-M. S. was partially supported by FNR project O20/14766753. J.-M.S. would like to thank IHES and MPIM Bonn, where part of this work was completed.}

\date{v0, \today}

\begin{document}

\begin{abstract}
  Let $S$ be a closed surface of genus at least $2$, let $h$ be a smooth metric of curvature $K<-1$ on $S$, and let $h_0$ be a hyperbolic metric on $S$. We show that there exists a unique quasifuchsian AdS spacetime with left metric isotopic to $h_0$, containing a past-convex Cauchy surface with induced metric isotopic to $h$.
\end{abstract}

\maketitle

\tableofcontents

\section{Introduction}

\subsection{Main result}

We consider here 3-dimensional maximal globally hyperbolic and spatially compact (MGHC) anti-de Sitter (AdS) spacetimes. If $M$ is a MGHC AdS spacetime, it contains closed Cauchy surfaces, which are all homeomorphic to a fixed closed surface $S$ which we will assume to be of genus at least $2$.

Those spacetimes are known since the work of Mess \cite{mess} to have many remarkable similarities with quasifuchsian hyperbolic manifolds, and are for this reason often called ``quasifuchsian AdS spacetimes'', a terminology we will follow here. The identity component of the isometry group of the 3-dimensional AdS space $\AdS^3$ identifies (up to finite index) with ${\rm PSL}(2,\R)\times {\rm PSL}(2,\R)$, and, if $M$ is a quasifuchsian AdS spacetime, its holonomy representation $\rho:\pi_1M=\pi_1(S)\to {\rm PSL}(2,\R)\times {\rm PSL}(2,\R)$ is the product $\rho=(\rho_L,\rho_R)$ of two Fuchsian representations, called its left and right representations. Those representations $\rho_L$ and $\rho_R$ are therefore the holonomy representations of two hyperbolic metrics $h_L$ and $h_R$ on $S$, called the left and right metrics of $M$, see \cite{mess,mess-notes}.

We denote by $\cT_S$ the Teichm\"uller space of $S$, which is the space of hyperbolic metrics on $S$ (considered up to isotopy) and denote by $\mathcal{QF}_{AdS}(S)$ the deformation space of quasifuchsian AdS spacetimes with Cauchy surfaces homeomorphic to $S$.

The main result here is the following statement, which  gives an affirmative answer to Question 6.11 in \cite{weylsurvey}.

\begin{theorem}\label{thm:closed-surf}
Let $h$ be a complete Riemannian metric of curvature $K<-1$ on $S$, and let $h_0\in\cT_S$ be a hyperbolic metric on $S$. There is a unique quasifuchsian AdS spacetime $M\in\QF$ with left metric isotopic to $h_0$ and containing a past-convex spacelike surface with induced metric isotopic to $h$.
\end{theorem}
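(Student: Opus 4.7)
The plan is to use a degree-theoretic continuity method. Define
\[
\cV = \bigl\{ (M,\Sigma) : M\in\QF,\ \Sigma \subset M \text{ a past-convex spacelike Cauchy surface} \bigr\}\big/{\sim},
\]
quotiented by isometry of $M$ and isotopy of $\Sigma$, and let $\cE$ be the space of isotopy classes of Riemannian metrics of curvature $K<-1$ on $S$. Consider the natural map
\[
\Phi: \cV \to \cT_S \times \cE, \qquad (M,\Sigma) \longmapsto \bigl( h_L(M),\, [h_\Sigma] \bigr),
\]
where $h_\Sigma$ is the induced metric on $\Sigma$. The theorem is equivalent to $\Phi$ being a bijection. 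I would first check that $\cV$ carries the structure of a smooth Fr\'echet manifold (via Mess's identification $\QF\cong\cT_S\times\cT_S$ together with the smooth structure on spacelike embeddings of $S$) and that $\Phi$ is smooth.

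The first substantive step is to show that $\Phi$ is a local diffeomorphism. The shape operator $B$ of $\Sigma$ satisfies a definite sign condition on $\det B$ coming from the Gauss equation in AdS$^3$ together with $K<-1$, making the linearization of the coupled Gauss--Codazzi system strictly elliptic. A tangent vector to $\cV$ at $(M_0,\Sigma_0)$ is a pair consisting of an infinitesimal deformation of the ambient holonomy and a normal deformation of the embedding. Using Mess's formulas writing the left and right metrics in terms of the embedding data as $h_{L,R} = I\bigl((E \pm JB)\cdot,(E \pm JB)\cdot\bigr)$, where $E$ is the identity and $J$ the $\pi/2$ rotation on $T\Sigma$, the differential $d\Phi$ becomes a Fredholm operator; an integration by parts exploiting the sign of $\det B$ then forces both kernel and cokernel to vanish.

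The second substantive step is properness. Given $(M_n,\Sigma_n)\in\cV$ with $\Phi(M_n,\Sigma_n)\to (h_{L,\infty},[h_\infty])$, the uniform bound $K_{h_n}<-1$ together with the convergence $h_n\to h_\infty$ gives, via the Gauss equation, uniform two-sided pointwise control of $\det B_n$. The Codazzi equation then yields $C^{1,\alpha}$ control on $B_n$. A barrier/maximum principle argument, comparing $\Sigma_n$ with a canonical reference Cauchy surface in $M_n$ (for example a maximal Cauchy surface, which exists and depends continuously on $M$), prevents $\Sigma_n$ from escaping to the past or future singularity, and Arzel\`a--Ascoli extracts a convergent subsequence. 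The right metric $h_R(M_n)$ is recovered from $(g_{\Sigma_n},B_n)$ via Mess's formula and therefore also converges in $\cT_S$, giving convergence of the spacetime in $\QF$.

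Combining these two steps, $\Phi$ is a proper local diffeomorphism, hence a covering map onto its image, and its image is both open and closed in the connected target $\cT_S\times\cE$, so $\Phi$ is surjective. It remains to show the degree is one. The source $\cV$ is connected (since $\QF$ is connected and, for each fixed $M$, the space of past-convex Cauchy surfaces is path-connected by a foliation/mean-curvature-flow argument), so $\Phi$ has a well-defined degree; I would compute it at a special configuration such as a small past-convex perturbation of an explicit symmetric surface in a Fuchsian AdS spacetime, where the preimage can be identified uniquely using the ambient $\mathbb{Z}/2$-symmetry. The main obstacle is expected to be the properness step: the space of past-convex Cauchy surfaces in a fixed spacetime is noncompact, and ruling out degeneration toward the boundary of the MGHC region requires delicate geometric estimates that rely in an essential way on the strict inequality $K<-1$. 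A secondary obstacle is the infinitesimal rigidity, where the correct sign of the Hessian-like quadratic form on the deformation space must be extracted from the Gauss--Codazzi system.
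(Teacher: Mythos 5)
Your overall architecture (proper local diffeomorphism plus a degree count) matches the paper's, but two of the load-bearing steps are not actually supplied, and one of them would fail as described. The most serious problem is the degree-one computation. For a covering map, the degree is the cardinality of a fiber, and exhibiting a target point with extra symmetry does not show that its fiber is a singleton: the fiber over a $\mathbb{Z}/2$-symmetric point of $\cT_S\times\cE$ is merely $\mathbb{Z}/2$-invariant, and could consist of two (or more) configurations exchanged by the symmetry. You need an independent uniqueness statement at \emph{some} point of the target, and symmetry alone does not provide one. The paper gets this by connecting the given $h$ through a path $(h_t)$ of curvature-$<-1$ metrics to a metric $h_1$ of \emph{constant} curvature $K<-1$, where injectivity of $\Phi_{h_1}$ is already known from the Landslide Theorem of Bonsante--Mondello--Schlenker; a fibered covering-map argument over $[0,1]\times\cT_S$ then transports the degree. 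Relatedly, your global setup --- letting $\Sigma$ and its induced metric both vary, so that $\Phi$ maps an infinite-dimensional $\cV$ to $\cT_S\times\cE$ --- forces you into Fr\'echet-manifold territory where the inverse function theorem is not available without tame estimates. The paper avoids this entirely by fixing $h$ and working with the finite-dimensional ($6\mathfrak{g}-6$) space $\cV(h)$ of equivariant isometric embeddings of $(S,h)$, for which the manifold structure and the properness of $\Phi_h$ are already established by Tamburelli.

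The second gap is the infinitesimal rigidity, which you correctly identify as an obstacle but dispose of in one sentence (``an integration by parts exploiting the sign of $\det B$ forces both kernel and cokernel to vanish''). This is the actual mathematical content of the paper. The relevant positivity is not the sign of $\det B$ (which only gives $K<-1$) but \emph{strong past-convexity}, i.e.\ both principal curvatures positive; this is what makes the operator $JBJ^{\#}$ negative definite. The paper's argument proceeds by encoding the variation of the left metric $I^{\#}_+=I((E+JB)\cdot,(E+JB)\cdot)$ in a bundle morphism $b=(E+JB)^{-1}J\dot B$, showing that $b$ satisfies a Codazzi equation for the connection of $I^{\#}_+$ together with two trace identities, writing $b=D^{\#}v+\mu J^{\#}$ when the variation of $I^{\#}_+$ is trivial, reducing everything to the scalar elliptic equation $\tr\bigl(JBJ^{\#}(-D^{\#}D^{\#}\mu+\mu E)\bigr)=0$, and killing $\mu$ by the maximum principle. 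Whether a direct integration by parts can substitute for the maximum principle would require controlling the divergence of $JBJ^{\#}$ with respect to $D^{\#}$, which you have not addressed. As written, your proposal assumes the conclusion of the paper's Proposition on local rigidity rather than proving it.
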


Tamburelli \cite[Proposition 7.1]{tamburelli2016} showed that the existence holds, based on a topological intersection argument. To show the uniqueness, the key point is to prove the infinitesimal rigidity of quasifuchsian AdS metrics with respect to the left metric and the induced metric on a past-convex Cauchy surface (see Proposition \ref{prop:local-rigidity}).

When $h$ has constant curvature, Theorem \ref{thm:closed-surf} follows from the ``Landslide Theorem'' in \cite[Theorem 1.15]{cyclic}. Indeed, for each constant $K<-1$, a quasifuchsian AdS spacetime $M$ contains a unique past-convex Cauchy surface $S_K$ with induced metric of constant curvature $K$ (see \cite{BBZ2}), and the left (resp. right) hyperbolic metric of $M$ is obtained by a landslide of parameter $\theta$ (resp. $-\theta$) with $\theta\in (0,\pi)$ and $K=-1/\cos^2(\theta/2)$, determined by the induced metric and third fundamental form on $S_K$, see \cite[Lemma 1.9]{cyclic}. Since there is a unique landslide map with parameter $\theta\in(0,2\pi)$ (resp. $\theta\in (-2\pi,0)$) taking a given hyperbolic metric to another one (see \cite[Theorem 1.15]{cyclic}), there is a unique choice of $M$ such that the induced metric on $S_K$ and the left (resp. right) hyperbolic metric are prescribed. This argument actually extends to the case where $h$ has constant curvature $-1$, with Thurston's Earthquake Theorem \cite{thurston:earthquakes,mess} used instead of the result on landslide.

It follows from Theorem \ref{thm:closed-surf}, using an elementary symmetry argument, that there also exists a unique quasifuchsian AdS spacetime having $h_0$ as right hyperbolic metric, and containing a past-convex surface with induced metric $h$. Another direct symmetry argument shows that one can prescribe the induced metric on a future-convex, rather than past-convex, spacelike surface.

There is a well-defined duality between locally strongly convex spacelike surfaces in quasifuchsian AdS spacetimes (see \cite[Section 11.1]{BBZ2} or \cite[Section 2.6]{cyclic2}), where by ``strongly convex'' we mean that the second fundamental form is everywhere positive definite. This duality associates to every past-convex spacelike surface $\Sigma$ a surface $\Sigma^*$, defined as the space of points dual to the oriented tangent planes of $\Sigma$. This construction defines a duality map from $\Sigma$ to $\Sigma^*$. If $\Sigma$ is spacelike and strongly past-convex, then $\Sigma^*$ is spacelike and strongly future-convex, and the pull-back by the duality map of the induced metric on one is the third fundamental form on the other. Moreover, the curvature $K^*$ of the induced metric on $\Sigma^*$ at the point $x^*$ which is the image under the duality map of a point $x\in \Sigma$ is equal to $K^*=-K/(1+K)$, where $K$ is the curvature of the induced metric on $\Sigma$ at $x$.
Through this duality, Theorem \ref{thm:closed-surf} also implies that given a hyperbolic metric $h_0$ and another metric $h$ on $S$ with curvature $K<-1$, there is a unique quasifuchsian AdS spacetime with left (or right) hyperbolic metric $h_0$, and containing a future-convex (or past-convex) spacelike surface with third fundamental form $h$.

\subsection{Background and motivations}
\label{subsec:background}

Our main motivation comes from hyperbolic geometry. As already mentioned, quasifuchsian (that is, 
MGHC) AdS spacetimes have deep analogies with quasifuchsian hyperbolic manifolds.  There are several questions one can consider in a quasifuchsian hyperbolic manifold, for instance whether one can find a (unique) quasifuchsian hyperbolic manifold containing a (unique) geodesically convex subset such that the induced metric (or third fundamental form) on the boundary is prescribed, see \cite{weylsurvey}. Here we are motivated by a slightly different type of question, in the setting of {\em hyperbolic ends}. A prime example of hyperbolic end is a connected component of the complement of the convex core in a convex co-compact hyperbolic manifold, but the following definition is somewhat more general.

\begin{definition} \label{df:hypend}
  A {\em hyperbolic end} is a non-complete 3-dimensional hyperbolic manifold $(E,g)$, where $E=S\times (0,\infty)$, complete on the side corresponding to $\infty$, and having as metric completion a pleated concave surface at the side corresponding to $0$.

  Given a hyperbolic end $E$, we denote by $\partial_0E$ the pleated surface boundary component corresponding to $0$, and by $\partial_\infty E$ the asymptotic boundary component corresponding to $\infty$.
\end{definition}

The asymptotic boundary $\partial_\infty E$ of $E$ is naturally equipped with a conformal structure, and more precisely with a complex projective structure. The pleated boundary $\partial_0E$ is equpped with a hyperbolic metric and with a measured geodesic lamination measuring its ``pleating''.

Such a hyperbolic end contains many embedded surfaces $S\subset E$ such that $\partial_0E$ and $S$ bound together a relatively compact geodesically convex subset $\Omega$ of $E$ -- we call such a surface a {\em locally convex surface} in $E$.

Given such a smooth, locally convex surface $S\subset E$, the hyperbolic Gauss map -- which sends a point $x\in S$ to the endpoint at infinity of the geodesic ray starting from $x$, normal to $S$, towards the concave side -- is a homeomorphism from $S$ to $\partial_\infty E$.

We are interested in the following question and its dual, see also \cite{weylsurvey}.

\begin{question} \label{q:end}
  Let $S$ be a closed surface of genus at least $2$, let $h$ be a smooth metric of curvature $K>-1$ on $S$, and let $c\in \cT_S$ be a conformal structure on $S$. Is there a unique hyperbolic end $E$ containing a locally convex surface $\Sigma$ homeomorphic to $S$, with induced metric isotopic to $h$ and such that the pull-back by the hyperbolic Gauss map of the conformal structure at infinity of $E$ is isotopic to $c$?
\end{question}

\begin{question} \label{q:end*}
  Let $S$ be a closed surface of genus at least $2$, let $h^*$ be a smooth metric of curvature $K^*<1$ on $S$ such that all closed, contractible geodesics of $(S,h^*)$ have length strictly larger than $2\pi$, and let $c\in \cT_S$ be a conformal structure on $S$. Is there a unique hyperbolic end $E$ containing a locally convex surface $\Sigma$ homeomorphic to $S$, with third fundamental form isotopic to $h^*$ and such that the pull-back by the hyperbolic Gauss map of the conformal structure at infinity of $E$ is isotopic to $c$?
\end{question}

The answer to Question \ref{q:end} is positive when $h$ has constant curvature $-1$, this corresponds to the main result in \cite{scannell-wolf}. When $h$ (resp. $h^*$) has constant curvature $K\in (-1,0)$ (resp. $K^*\in (-\infty,0)$), Questions \ref{q:end} and \ref{q:end*} correspond to Questions 9.1 and 9.2 in \cite{cyclic}. The existence part is proved for both questions in Theorem 1.9 and Theorem 1.10 of \cite{cyclic2}.

Question \ref{q:end*} also has a positive answer in the ``limit'' case where $h^*$ is replaced by a measured lamination, this is the main result of \cite{dumas-wolf}. In another limit case where $h^*$ corresponds to the pleating pattern associated to a circle packing or circle pattern, Question \ref{q:end*} corresponds to a well-known conjecture of Kojima, Mizushima and Tan, see \cite{KMT,KMT2,KMT3,delaunay}.

In the analogy discovered by Mess \cite{mess,mess-notes}, the conformal metrics at infinity of a quasifuchsian hyperbolic manifold correspond to the left and right hyperbolic metrics of a quasifuchsian (or 
MGHC) AdS spacetime. In this sense, Theorem \ref{thm:closed-surf} provides a positive answer to the AdS analog of Question \ref{q:end}, and also, through the duality 
between strongly convex spacelike surfaces in quasifuchsian AdS spacetimes mentioned above, to the AdS analog of Question \ref{q:end*}.

\subsection{Outline of the paper}

Section 2 contains background definition and results on AdS geometry, on quasifuchsian AdS spacetimes, as well as on geometric data on immersed surfaces in AdS spacetimes. The proof of the main result is in Section 3, while Section 4 singles out an application.

\section{Preliminaries}

\subsection{The 3-dimensional anti-de Sitter geometry}
Let $\R^{2,2}$ denote the real 4-dimensional vector space $\R^4$ equipped with a symmetric bilinear form $\langle \cdot, \cdot\rangle_{2,2}$ of signature $(2,2)$, where $\langle x, y\rangle_{2,2}:=x_1y_1+x_2y_2-x_3y_3-x_4y_4$. The \textit{quadric model} of the 3-dimensional \textit{anti-de Sitter} (AdS) space is
$$AdS^3:=\{x\in\R^{2,2} ~|~ \langle x,x\rangle_{2,2}=-1\}~,$$
equipped with a pseudo-Riemannian metric of signature $(2,1)$, which is induced by restricting the bilinear form $\langle \cdot, \cdot\rangle_{2,2}$ to the tangent space at each point of of $AdS^3$. It is a 3-dimensional Lorentzian symmetric space of constant curvature $-1$ diffeomorphic to $\HH^2\times \mathbb{S}^1$.

    The \emph{projective model} $\AdS^3$ of 3-dimensional AdS space is defined as the image of $AdS^3$ under the projection $\pi:\mathbb{R}^{2,2}\backslash\{0\}\rightarrow \mathbb{RP}^{3}$, with the metric induced from the projection. It is clear that the quadric model $AdS^3$ is a double cover of the projective model $\AdS^3$.

    The boundary of $\AdS^3$, denoted by $\partial \AdS^3$, is the image of  $\{x\in\mathbb{R}^{2,2}:\langle x,x\rangle_{2,2}=0\}$ under $\pi$, which is foliated by two families of projective lines, called the {\em left} and {\em right leaves}, respectively.

    The geodesics in $\AdS^3$ are given by the intersection of projective lines in $\RP^3$ with $\AdS^3$: the {\em spacelike} geodesics correspond to the projective lines intersecting the boundary $\partial\AdS^3$ in two points, while {\em lightlike} geodesics are tangent to $\partial\AdS^3$, and {\em timelike} geodesics are disjoint from $\partial\AdS^3$.

   The isometry group of $AdS^3$ is the indefinite orthogonal group ${\rm O}(2,2)$ of reversible linear transformations on $\R^{2,2}$ preserving the bilinear form $\langle \cdot, \cdot \rangle_{2,2}$. The group of orientation and time-orientation preserving isometries of $\AdS^3$, denoted by $\Isom_0(\AdS^3)$,  is the identity component of ${\rm PO}(2,2)={\rm O}(2,2)/\{\pm {\rm Id}\}$, which can be identified with $\PSL(2,\R)\times\PSL(2,\R)$. One simple way to see this is because the boundary $\partial\AdS^3$ is foliated by two families of lines, and those two foliations are invariant under any element of $\Isom_0(\AdS^3)$, since isometries act projectively in the projective model. The set of leaves of each foliation is equipped with a real projective structure by the intersections with any leave of the other foliation, and the action of $\rm{Isom}_0(\AdS^3)$ defines in this way two elements of $\PSL(2,\R)$.

    We always use the projective model $\AdS^3$ throughout the paper. It is interesting to notice that $\AdS^3$ has a Lie group model, in which $\AdS^3$ is identified with $\PSL(2,\R)$. More details about the geometry of anti-de Sitter space can be found in e.g. \cite{bonsante-seppi:anti,mess,mess-notes,adsquestions}.

\subsection{Quasifuchsian AdS spacetimes}
An AdS spacetime is a Lorentzian 3-manifold locally isometric to $\AdS^3$. An AdS spacetime $M$ is {\em  maximal globally hyperbolic and spatially compact} (MGHC) if
 \begin{itemize}
 \item $M$ is globally hyperbolic and spatially compact (GHC): $M$ contains a closed Cauchy surface (i.e. a spacelike surface intersecting each inextensible timelike curve exactly once).
 \item any isometric embedding of $M$ into a GHC AdS spacetime is an isometry.
 \end{itemize}

 It is well-known that a globally hyperbolic spacetime is topologically a product of any of its Cauchy surfaces with an interval, see e.g. \cite[Chapter 3]{Beem1996}.
MGHC AdS spacetimes have been shown by G. Mess \cite{mess,mess-notes} to present remarkable analogies with quasifuchsian hyperbolic manifolds, and they are now often called {\em quasifuchsian AdS spacetimes} in the mathematics literature.

It was proved by Mess \cite{mess} that given any MGHC AdS spacetime with Cauchy surfaces homeomorphic to $S$, its holonomy representation $\rho:\pi_1(S) \rightarrow \Isom_0(\AdS^3)=\PSL(2,\R)\times\PSL(2,\R)$ can be written as $\rho=(\rho_L, \rho_R)$, where $\rho_L, \rho_R: \pi_1(S)\rightarrow {\rm PSL}(2,\R)$ have maximal Euler number, and are therefore (by a result of Goldman \cite{Goldman}) {\em Fuchsian} representations (i.e. the holonomy representations of hyperbolic metrics on $S$). So the space $\QF$ appearing in Section \ref{subsec:background} is exactly the deformation space of MGHC AdS spacetimes with Cauchy surfaces homeomorphic to $S$.

A quasifuchsian AdS spacetime $M\in\QF$ contains a non-empty closed subset $N$ which is {\em geodesically convex} (i.e. each geodesic segment of $M$ connecting any two points of $N$ is contained in $N$). In particular, the smallest non-empty geodesically convex closed subset of $M$ is called the {\em convex core} of $M$, denoted by $C_M$. The boundary of $C_M$ is the union of two (possibly identified) spacelike surfaces. In the Fuchsian case (i.e. the left and right metrics are equal), $C_M$ degenerates to a totally geodesic spacelike surface. In the non-Fuchsian case,  each boundary component of $C_M$ is a spacelike surface pleated along a measured geodesic lamination, whose measures record the bending angles along totally geodesic pieces. In both cases, the induced metrics on the two boundary components of $C_M$ are hyperbolic.

 \subsection{The linearized Gauss and Codazzi equations}

Let $(M,g)$ be a quasifuchsian AdS spacetime and let $\Sigma\subset M$ be an (embedded) spacelike surface with induced metric $I$. The {\em shape operator} of $\Sigma$ is the bundle morphism $B:T\Sigma\rightarrow T\Sigma$ defined by
    \begin{equation*}
        B(u)=-\nabla_{u}n~,
    \end{equation*}
    where $n$ is the future-directed unit normal vector field on $\Sigma$, $u$ is a tangent vector field to $\Sigma$ and $\nabla$ is the Levi-Civita connection of $g$. The {\em second and third fundamental forms} of $\Sigma$ are respectively defined by
    \begin{equation*}
        \II(u,v)=I(Bu,v)~,\qquad
        \III(u,v)=I(Bu,Bv)~,
    \end{equation*}
    for all $u,v\in T_p\Sigma$ with $p\in \Sigma$. We call the pair $(I,B)$ of induced metric and shape operator of $\Sigma$ the {\em embedding data} of $\Sigma$.

    It is well-known that $(I,B)$ satisfies the following {\em Gauss and Codazzi equations} respectively:
\begin{equation*}\label{eq:Gauss}
\begin{split}
-1-\det(B)&=K~,\\
d^{D}B&=0~,
\end{split}
\end{equation*}
where $K$ is the Gaussian curvature of $I$,  $D$ is the Levi-Civita connection of $I$, while the exterior derivative $d^D$ of $B$ (considered as a one-form with values in $T\Sigma$) is defined as
\begin{equation*}
(d^DB)(u,v)=D_u(Bv)-D_v(Bu)-B([u,v])~,
\end{equation*}
for any two tangent vector fields $u$, $v$ on $\Sigma$.

Take the first-order derivatives of both sides of the Gauss equation and assume that $B$ is non-degenerate. A direct computation shows that the shape operator $B$ satisfies the following
\begin{equation*}\label{eq:linear-Gauss}
\tr(B^{-1}\dot B)=0~,
\end{equation*}
called the {\em linearized Gauss equation}.

A spacelike surface $\Sigma$ in $M$ is said to be {\em strongly convex} if the determinant of the shape operator $B$ of $\Sigma$ is positive definite. In particular, $\Sigma$ is said to be {\em strongly future-convex} (resp. {\em strongly past-convex}) if the principal curvatures of $B$ are both negative (resp. positive).

\subsection{The left and right metrics on a spacelike surface}

 Let $\Sigma$ be an embedded (smooth) spacelike surface in a quasifuchsian AdS spacetime $M$, with the embedding data $(I,B)$. Let $J$ be the complex structure on $\Sigma$ defined by $I$. It was shown in \cite[Lemma 3.16]{minsurf} that the left and right metrics of $M$, denoted by $I^\#_+$ and $I^\#_-$, can be defined alternatively by
\begin{equation*}\label{def:left-metric}
 I^\#_{\pm} = I((E\pm JB)\cdot, (E\pm JB)\cdot)~,
\end{equation*}
in the sense that if $I^\#_\pm$ are defined in this manner as Riemannian metrics on $\Sigma$, they are isometric to $h_L$ and $h_R$, respectively.

Moreover, this definition is independent on the choice of embedded smooth spacelike surfaces. Since the left and right metrics play symmetric roles in our question, without loss of generality, we only consider the left metric of $M$ here.

\section{Proof of the main result}

Let $(M,g)\in\QF$ be a quasifuchsian AdS spacetime $M$ (with its metric denoted by $g$) which has left metric $h_L$ and contains an embedded smooth, strongly past-convex spacelike surface $\Sigma$ with induced metric $I$ (and with the metric $I^\#_+$ isotopic to $h_L$).

\subsection{Local rigidity}

In this section, we aim to show the infinitesimal rigidity of $g$ with respect to the left metric $I^\#_+$ and the induced metric $I$ on $\Sigma$, as stated below.

\begin{proposition}\label{prop:local-rigidity}
Any first order deformation of $g\in \QF$ which preserves the left metric $I^\#_+$ and the induced metric $I$ on the strongly past-convex spacelike surface $\Sigma$ at first order is trivial.
\end{proposition}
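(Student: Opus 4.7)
The plan is to represent a first-order deformation of $g\in\QF$ by the first-order variations $(\dot I,\dot B)$ of the embedding data $(I,B)$ on $\Sigma$, modulo Lie derivatives along vector fields on $M$. Using the hypothesis that $I$ is preserved at first order, I would first pick a gauge in which $\dot I=0$ as a $(0,2)$-tensor on $\Sigma$. Then $\dot B$ is automatically $I$-self-adjoint and satisfies the linearised Codazzi equation $d^{D}\dot B=0$ together with the linearised Gauss equation $\tr(B^{-1}\dot B)=0$. In this gauge the hypothesis on the left metric becomes $\dot I^{\#}_{+}=L_W I^{\#}_{+}$ for some vector field $W$ on $\Sigma$, reflecting the residual freedom that $I^{\#}_{+}$ represents $h_L$ only up to isotopy on $S$.

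The core computation uses the identity
\[
I^{\#}_{+}(u,v)\;=\;I\bigl(u,\,(E+JB-BJ+B^{2})\,v\bigr),
\]
derived from $I^{\#}_{+}(u,v)=I((E+JB)u,(E+JB)v)$ together with $B^{T}=B$ and $J^{T}=-J$ with respect to $I$. Since $\dot I=0$ forces $\dot J=0$, differentiation gives
\[
\dot I^{\#}_{+}(u,v)\;=\;I\bigl(u,\bigl([J,\dot B]+\{B,\dot B\}\bigr)v\bigr),
\]
so the hypothesis translates, as an equation between $I$-self-adjoint endomorphisms of $T\Sigma$, into
\[
[J,\dot B]+\{B,\dot B\}\;=\;I^{-1}\!\bigl(L_W I^{\#}_{+}\bigr). \qquad (\star)
\]

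The crucial pointwise algebraic fact, which I would verify in the principal frame of $B$ where $B=\operatorname{diag}(\lambda_{1},\lambda_{2})$, is that the operator $\dot B\mapsto [J,\dot B]+\{B,\dot B\}$ on $I$-self-adjoint endomorphisms is \emph{invertible} whenever $\lambda_{1},\lambda_{2}>0$. Writing $\dot B=\begin{pmatrix}a&b\\b&c\end{pmatrix}$, the homogeneous equation reduces to $b=\lambda_{1}a$, $b=-\lambda_{2}c$ and $a-c+(\lambda_{1}+\lambda_{2})b=0$, which after elimination gives $b\,(\lambda_{1}+\lambda_{2})(1+\lambda_{1}\lambda_{2})/(\lambda_{1}\lambda_{2})=0$; past-convexity ($\lambda_{1},\lambda_{2}>0$) and $1+\lambda_{1}\lambda_{2}=-K>0$ (from the Gauss equation and $K<-1$) force $\dot B=0$. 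Consequently $(\star)$ determines $\dot B$ as a linear function of $W$.

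The hard part will be to conclude that $W$ itself must vanish. Substituting the expression for $\dot B$ into the linearised Codazzi equation (and using the linearised Gauss equation) yields an overdetermined first-order linear system on $W$; since $(\Sigma,I^{\#}_{+})$ is a closed hyperbolic surface of genus at least $2$ and hence admits no non-trivial Killing fields, I expect this system to force $W=0$, and therefore $\dot B=0$. Combined with $\dot I=0$, the fundamental theorem of surface theory then shows that the first-order deformation of $g$ is an infinitesimal isometry of $M$, so $[\dot g]=0$ in $T_g\QF$.
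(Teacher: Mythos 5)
Your setup and the pointwise algebra are correct: fixing the gauge so that $\dot I=0$, the identity $I^\#_+(u,v)=I\bigl(u,(E+JB-BJ+B^2)v\bigr)$ and its derivative are right, and your computation in the principal frame showing that $\dot B\mapsto [J,\dot B]+\{B,\dot B\}$ is injective on $I$-self-adjoint endomorphisms when $\lambda_1,\lambda_2>0$ and $1+\lambda_1\lambda_2=-K>0$ is a valid (and slightly different) normalization from the paper's, which instead introduces $b=(E+JB)^{-1}J\dot B$ so that $\dot I^\#_+=I^\#_+(b\cdot,\cdot)+I^\#_+(\cdot,b\cdot)$. So far the two routes are essentially equivalent repackagings of the same information.

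The genuine gap is the step you label ``the hard part.'' Asserting that the overdetermined system on $W$ ``should'' force $W=0$ because a closed hyperbolic surface has no Killing fields is not an argument, and the mechanism you invoke is not the right one: Killing fields of $I^\#_+$ are not obstructions here (they give $L_WI^\#_+=0$, hence $\dot B=0$ by your invertibility, which is exactly the desired conclusion), and conversely the absence of Killing fields does nothing to exclude non-Killing $W$ solving the system. What actually closes the argument in the paper is a second-order elliptic reduction: one shows that $b$ satisfies a Codazzi equation for the Levi-Civita connection $D^\#$ of the hyperbolic metric $I^\#_+$, writes $b=D^\#v+\mu J^\#$ using the hypothesis $\dot I^\#_+=\mathcal{L}_vI^\#_+$, deduces $v=-J^\#D^\#\mu$ from $d^{D^\#}b=0$ and $K^\#=-1$, hence $b=J^\#(-D^\#D^\#\mu+\mu E)$, and then feeds this into the trace identity $\tr(JBb)=0$ coming from the linearized Gauss equation. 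The resulting scalar equation $\tr\bigl(JBJ^\#(-D^\#D^\#\mu+\mu E)\bigr)=0$ is elliptic because $JBJ^\#$ is $I^\#_+$-self-adjoint and negative definite (with eigenvalues $-k_2,-k_1$), and the maximum principle forces $\mu\equiv 0$, hence $b=0$ and $\dot B=0$. None of this reduction to a single potential function and the maximum-principle step appears in your proposal, and it is the heart of the proof; without it you have only a pointwise algebraic statement plus an unproven claim about a global PDE system. The final passage from $\dot I=\dot B=0$ on $\Sigma$ to $\dot g=0$ on $M$ also needs the uniqueness of the maximal extension of the germ of the metric along $\Sigma$, not merely the local fundamental theorem of surface theory, though that is a minor point by comparison.
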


We first consider the first-order deformation of the embedding data $(I,B)$ of the strongly past-convex spacelike surface $\Sigma$.

\begin{lemma} \label{st:ads}
  Let $(I,B)$ be the induced metric and shape operator of a strongly past-convex spacelike surface $\Sigma$ in a quasifuchsian AdS spacetime. Consider a first-order deformation such that $I$ is fixed
  and $\dot B$ satisfies the Codazzi and linearized Gauss equations and that the corresponding variation of $I^\#_+$ is trivial, i.e. induced by a vector field say $v$ on $\Sigma$. Then $\dot B=0$.
\end{lemma}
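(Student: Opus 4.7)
The plan is to compute $\dot I^\#_+$ explicitly in terms of $\dot B$, then exploit the hypothesis $\dot I^\#_+ = L_v I^\#_+$ together with the Codazzi equation, the linearized Gauss equation, and the strong past-convexity of $\Sigma$ to conclude that $\dot B = 0$.

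First I would differentiate the identity $I^\#_+(X,Y) = I((E+JB)X,(E+JB)Y)$ at the given deformation. Since $\dot I = 0$, the complex structure $J$ of $I$ is also fixed, and using the $I$-self-adjointness of $B$ and the $I$-skew-symmetry of $J$ one obtains
\[
\dot I^\#_+(X, Y) = I\bigl(X, (B\dot B + \dot B B + [J, \dot B])Y\bigr),
\]
where $[J, \dot B] = J\dot B - \dot B J$ is self-adjoint with respect to $I$ (in fact it equals $2J\dot B_0$, with $\dot B_0$ the traceless part of $\dot B$). Hence $\dot I^\#_+$ is a symmetric bilinear form on $T\Sigma$ depending linearly on $\dot B$ and on the background data $B$ and $J$.

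Next I would translate the hypothesis $\dot I^\#_+ = L_v I^\#_+$ into a global integral condition. Because $I^\#_+$ is hyperbolic, the tangent space to Teichm\"uller space at $[I^\#_+]$ is represented by holomorphic quadratic differentials on the Riemann surface $(\Sigma, I^\#_+)$, and Lie derivatives form exactly the $L^2(I^\#_+)$-orthogonal complement. Thus the hypothesis is equivalent to
\[
\int_\Sigma \langle \dot I^\#_+, q\rangle_{I^\#_+}\, dA_{I^\#_+} = 0
\]
for every holomorphic quadratic differential $q$ on $(\Sigma, I^\#_+)$. The core of the argument is then to construct a natural such $q$ from $\dot B$ and $B$ (using the interplay between $J$, the complex structure of $I^\#_+$, and the operator $B^{-1}$ that appears in the linearized Gauss equation) and to show that, after integration by parts using the Codazzi equation $d^D \dot B = 0$ and the linearized Gauss equation $\tr(B^{-1}\dot B) = 0$, the pairing above reduces to a sign-definite quadratic expression in $\dot B$, weighted by $B^{-1}$. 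The strong past-convexity of $\Sigma$ (positive-definiteness of $B$, hence of $B^{-1}$) then forces $\dot B \equiv 0$.

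The main obstacle is constructing the correct holomorphic quadratic differential and carrying out the integration by parts so as to produce a genuinely positive weight on $\dot B$. The difficulty arises from the fact that the complex structures of $I$ and of $I^\#_+$ are generally different, so the natural candidate mixes both structures, and one must carefully track how $B$ intertwines them via the bundle isomorphism $T = E + JB$ (which is invertible precisely because $K < -1$, so $\det B > 0$). An alternative, more local approach would be to derive an overdetermined elliptic system for $\dot B$ from the constraints and the equation $\dot I^\#_+ - L_v I^\#_+ = 0$, and then conclude by a maximum principle; but the global integration-by-parts strategy seems better suited to the hypothesis, which is itself global in nature.
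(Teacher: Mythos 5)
Your opening computation is correct: with $\dot I=0$ one indeed gets $\dot I^\#_+(X,Y)=I\bigl(X,(B\dot B+\dot BB+[J,\dot B])Y\bigr)$, which matches the paper's formula $\dot I^\#_+=I^\#_+(b\cdot,\cdot)+I^\#_+(\cdot,b\cdot)$ with $b=(E+JB)^{-1}J\dot B$. But the rest of the proposal is a plan, not a proof, and the step you yourself identify as ``the main obstacle'' --- constructing a holomorphic quadratic differential from $\dot B$ and showing that the $L^2$-pairing with $\dot I^\#_+$ integrates by parts to a sign-definite quantity --- is precisely where the difficulty lives, and there is a concrete reason to doubt the naive candidate. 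The natural traceless Codazzi object here is $b$ itself: it is traceless and satisfies $d^{D^\#}b=0$ for the connection of $I^\#_+$ (this is the paper's Claim \ref{clm:Codazzi-b}). However, $b$ is \emph{not} self-adjoint for $I^\#_+$; its decomposition is $b=D^\#v+\mu J^\#$, and the self-adjoint traceless part of $b$ is not itself Codazzi unless $\mu$ is constant, so $I^\#_+(b\cdot,\cdot)$ does not hand you a holomorphic quadratic differential. Until you exhibit an explicit $q$ and verify both its holomorphy and the positivity of the resulting weight (the operator that actually appears is $JBJ^\#$, which is \emph{negative} definite with eigenvalues $-k_2,-k_1$), the argument has no content. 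A secondary point: orthogonality of $\dot I^\#_+$ to all holomorphic quadratic differentials is a priori weaker than $\dot I^\#_+=\cL_vI^\#_+$ (the orthogonal complement of the transverse-traceless tensors also contains conformal deformations); the equivalence you assert only holds after using that $I^\#_{+,t}$ stays hyperbolic, which you should say.

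What the paper actually does is the route you set aside as an ``alternative'': it reduces everything to a \emph{scalar} unknown. From $\dot I^\#_+=\cL_vI^\#_+$ one gets $b-D^\#v=\mu J^\#$ for a function $\mu$; the Codazzi equation for $b$ together with $K^\#=-1$ forces $v=-J^\#D^\#\mu$, hence $b=J^\#(-D^\#D^\#\mu+\mu E)$; the constraint $\tr(JBb)=0$ (which encodes the linearized Gauss equation) then becomes the elliptic scalar equation $\tr\bigl(JBJ^\#(-D^\#D^\#\mu+\mu E)\bigr)=0$, whose second-order symbol is definite and whose zeroth-order coefficient $\tr(JBJ^\#)=-(k_1+k_2)$ is negative by strong past-convexity; the maximum principle at a maximum and at a minimum of $\mu$ gives $\mu\equiv0$, hence $b=0$ and $\dot B=0$. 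If you want to salvage the global strategy, you would essentially have to rediscover this potential $\mu$ anyway, since it is what makes the sign analysis tractable. As written, the proposal has a genuine gap at its core.
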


Let $(I_t,B_t)_{t\in [0,\epsilon]}$ be a smooth one-parameter family of embedding data with $(I_0,B_0)=(I,B)$, such that the first-order variation $\dot I=I'_0$ of $I_t$ at $t=0$ vanishes.
Let $J_t$ denote the complex structure on $\Sigma$ defined by $I_t$. It follows directly that $J_0=J$ and $\dot{J}=0$. The first-order variation of the left metric $I^\#_+$ satisfies that
\begin{equation*}
\begin{split}
\dot I^\#_+&=\frac{d}{dt}|_{t=0}I_t((E+J_tB_t)\cdot,(E+J_tB_t)\cdot)\\
&=I(J\dot{B},(E+JB)\cdot)+I((E+JB)\cdot,J\dot{B}\cdot)\\
&=I^\#_+((E+JB)^{-1}J\dot{B}\cdot, \cdot)+I^\#_+(\cdot,(E+JB)^{-1}J\dot{B}\cdot)~.
\end{split}
\end{equation*}

\begin{definition}\label{def:b}
  We define a bundle morphism $b:T\Sigma\rightarrow T\Sigma$ as
  \begin{equation*}
  b=(E+JB)^{-1}J\dot B~.
  \end{equation*}
\end{definition}
Then the expression of $\dot{I}^\#_+$ can be rewritten as
\begin{equation} \label{eq:var-1}
\dot I^\#_+ = I^\#_+(b\cdot, \cdot)+I^\#_+(\cdot, b\cdot)~.
\end{equation}

\begin{lemma}\label{lm:Codazzi-Gauss}
 Let $b$ be the bundle morphism defined in Definition \ref{def:b}, then 
  \begin{equation}
    \label{eq:b1}
    \tr((E+JB)b)=0~,
  \end{equation}
  \begin{equation}
    \label{eq:b2}
    \tr((E+(JB)^{-1})b)=0~.
  \end{equation}
\end{lemma}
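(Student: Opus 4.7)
The plan is to reduce both trace identities to algebraic rewritings of the definition $b=(E+JB)^{-1}J\dot B$, combined respectively with the self-adjointness of $\dot B$ and the linearized Gauss equation. No geometric input beyond what has already been recalled in Section 2 is needed.

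For the first identity, I would simply multiply through on the left and observe that
\[
(E+JB)\,b \;=\; J\dot B,
\]
so (2.2) is equivalent to $\tr(J\dot B)=0$. To see this, note that the shape operator $B_t$ is $I_t$-self-adjoint for every $t$; differentiating $I_t(B_tu,v)=I_t(u,B_tv)$ at $t=0$ and using $\dot I=0$ shows that $\dot B$ is $I$-self-adjoint. On the other hand, the complex structure $J$ associated to $I$ is $I$-anti-self-adjoint. Hence $J\dot B$ is $I$-anti-self-adjoint and therefore traceless, which yields (2.2).

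For the second identity, I would use the factorization
\[
E+(JB)^{-1} \;=\; (JB)^{-1}(E+JB),
\]
valid because $B$ is positive definite (as $\Sigma$ is strongly past-convex), so $JB$ and $E+JB$ are invertible (the eigenvalues of $JB$ are purely imaginary, of modulus $\sqrt{\det B}$, so $-1$ is not one of them). This gives
\[
(E+(JB)^{-1})\,b \;=\; (JB)^{-1}(E+JB)(E+JB)^{-1}J\dot B \;=\; (JB)^{-1}J\dot B \;=\; B^{-1}\dot B,
\]
so (2.3) becomes $\tr(B^{-1}\dot B)=0$, which is exactly the linearized Gauss equation stated in Section 2.3 (its hypothesis $\dot K=0$ holds here because $I$ is fixed).

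There is no real obstacle in this lemma: once $b$ is written in its defining form, both identities reduce to short algebraic manipulations plus two facts already available, namely the self-adjointness of $\dot B$ with respect to the fixed metric $I$ and the linearized Gauss equation. The only point worth checking carefully, which I addressed above, is the invertibility of $E+JB$ that legitimizes the factorization used for (2.3).
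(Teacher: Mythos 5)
Your proof is correct and takes essentially the same route as the paper: identity \eqref{eq:b1} reduces to $\tr(J\dot B)=0$ via the $I$-self-adjointness of $\dot B$, and identity \eqref{eq:b2} reduces to the linearized Gauss equation $\tr(B^{-1}\dot B)=0$ using $(JB)^{-1}=-B^{-1}J$ (your factorization $E+(JB)^{-1}=(JB)^{-1}(E+JB)$ is just the paper's computation run forwards). One small imprecision: $J\dot B$ is not in general $I$-anti-self-adjoint (that would require $\dot B$ to commute with $J$), but the tracelessness still holds since $\tr(J\dot B)=\tr\big((J\dot B)^{*}\big)=\tr(-\dot B J)=-\tr(J\dot B)$, which is all you need.
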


\begin{proof}
  By definition, $b=(E+JB)^{-1}J\dot B$, so $J\dot B=(E+JB)b$.
  Note that $B$ is self-adjoint for $I$, so since $\dot B$ is a first-order variation of $B$ still satisfying this condition,
  $\dot B$ is also self-adjoint for $I$. Therefore $\tr(J\dot B)=0$, so $\tr((E+JB)b)=0$, which proves \eqref{eq:b1}.

  In addition, since $\Sigma$ is strongly convex, the shape operator $B$ of $\Sigma$ is non-degenerate. Using $b=(E+JB)^{-1}J\dot B$, the linearized Gauss equation $\tr(B^{-1}\dot B)=0$ then translates as
  $$ \tr(-B^{-1}J(E+JB)b)=0~. $$
  Since $(JB)^{-1}=-B^{-1}J$, this can be written as
  $$ \tr((E+(JB)^{-1})b)=0~, $$
  proving \eqref{eq:b2}.
\end{proof}

\begin{remark}\label{eq:JB-inverse}
  Since $\tr(JB)=0$, the Cayley-Hamilton theorem shows that $(JB)^2 + \det(JB)E=0$. Since $\det(JB)=\det(B)$ and using the Gauss equation $-1-\det(B)=K$ (where $K$ is the Gaussian curvature of $I$), it follows that
  $$JB=(1+K)(JB)^{-1}~.$$
\end{remark}

Using this remark, equations \eqref{eq:b1} and \eqref{eq:b2} can be simplified and we arrive at the following statement.

\begin{lemma}\label{lm:equiv-equa}
  The system of equations \eqref{eq:b1} and \eqref{eq:b2} is equivalent to the following
  \begin{equation}
    \label{eq:b4}
    \tr(b)=0~,
  \end{equation}
  \begin{equation}
    \label{eq:b5}
    \tr(JBb)=0~.
  \end{equation}
\end{lemma}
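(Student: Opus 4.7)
The plan is to reduce equations \eqref{eq:b1} and \eqref{eq:b2} to the pair \eqref{eq:b4} and \eqref{eq:b5} by brute linear algebra, using Remark \ref{eq:JB-inverse} to eliminate $(JB)^{-1}$, and then checking that the resulting $2\times 2$ system in the unknowns $\tr(b)$ and $\tr(JBb)$ is non-degenerate because the Gaussian curvature $K$ of $I$ is strictly less than $-1$ (a consequence, via the Gauss equation $-1-\det(B)=K$, of the strong past-convexity of $\Sigma$, for which $\det(B)>0$).

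First I would expand both equations by linearity of the trace:
\begin{equation*}
\tr((E+JB)b) = \tr(b) + \tr(JBb), \qquad \tr((E+(JB)^{-1})b) = \tr(b) + \tr((JB)^{-1}b).
\end{equation*}
Hence \eqref{eq:b1} and \eqref{eq:b2} amount, respectively, to
\begin{equation*}
\tr(b)+\tr(JBb)=0, \qquad \tr(b)+\tr((JB)^{-1}b)=0.
\end{equation*}
The implication \eqref{eq:b4} and \eqref{eq:b5} $\Rightarrow$ \eqref{eq:b1} and \eqref{eq:b2} is then immediate: the first equation is exactly the sum $\tr(b)+\tr(JBb)=0$, while the second follows from $(JB)^{-1}=\frac{1}{1+K}JB$ (Remark \ref{eq:JB-inverse}), which gives $\tr((JB)^{-1}b)=\frac{1}{1+K}\tr(JBb)=0$ pointwise.

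For the converse direction, I would substitute $(JB)^{-1}=\frac{1}{1+K}JB$ into the second equation to obtain, pointwise on $\Sigma$,
\begin{equation*}
\tr(b)+\tr(JBb)=0, \qquad \tr(b)+\frac{1}{1+K}\tr(JBb)=0.
\end{equation*}
Subtracting yields $\bigl(1-\tfrac{1}{1+K}\bigr)\tr(JBb)=\tfrac{K}{1+K}\tr(JBb)=0$. Since $\Sigma$ is strongly past-convex, $\det(B)>0$, so $K=-1-\det(B)<-1$, and in particular both $K$ and $1+K$ are nonzero at every point of $\Sigma$. This forces $\tr(JBb)=0$ and then $\tr(b)=0$, giving \eqref{eq:b4} and \eqref{eq:b5}.

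I do not foresee a real obstacle here; the only thing to be a little careful about is that $K$ is not constant on $\Sigma$, so the rescaling of $(JB)^{-1}$ by $1/(1+K)$ and the subsequent solution of the linear system must both be interpreted pointwise, which is legitimate because $\tr$, as a trace of a bundle endomorphism, is a pointwise operation and $1+K$ is nowhere vanishing under the strong past-convexity assumption.
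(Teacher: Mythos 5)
Your proof is correct and follows essentially the same route as the paper: both use Remark \ref{eq:JB-inverse} to rewrite \eqref{eq:b2} as $\tr\bigl(\bigl(E+\tfrac{JB}{1+K}\bigr)b\bigr)=0$ and then take a (pointwise) linear combination with \eqref{eq:b1}, the non-degeneracy coming from $K<-1$ via strong convexity and the Gauss equation. Your write-up is merely more explicit about the coefficients of that linear combination and about the pointwise nature of the argument, both of which the paper leaves implicit.
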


\begin{proof}
We first show that the system of equations \eqref{eq:b1} and \eqref{eq:b2} implies that of \eqref{eq:b4} and \eqref{eq:b5}. Since the surface $\Sigma$ is strongly convex, we have $K=-1-\det(B)<-1$, which ensures that $K+1<0$. Using Remark \ref{eq:JB-inverse}, equation \eqref{eq:b2} can be written as
\begin{equation}
  \label{eq:b2a}
  \tr((E+\frac{JB}{K+1})b)=0~.
\end{equation}
Taking a linear combination of \eqref{eq:b1} and of \eqref{eq:b2a} shows \eqref{eq:b4} and \eqref{eq:b5}.

The other direction for the equivalence is clear using Remark \ref{eq:JB-inverse}.
\end{proof}

For the convenience of computation, we introduce the following lemma, see e.g. \cite[Proposition 3.12]{minsurf}.

\begin{lemma}\label{prop:connection-curv}
  \label{computation of connection and curvature}
  Let $\Sigma$ be a surface with a Riemann metric $g$. Let $A:T\Sigma\rightarrow T\Sigma$ be a bundle morphism such that $A$ is everywhere invertible and $d^{\nabla}A=0$, where $\nabla$ is the Levi-Civita connection of $g$. Let $h$ be the symmetric (0,2)-tensor defined by $h=g(A\cdot,A\cdot)$. Then the Levi-Civita connection of $h$ is given by
  \begin{equation*}
    \nabla^{h}_{u}(v)=A^{-1}\nabla_{u}(Av)~,
  \end{equation*}
  and its curvature is given by
  \begin{equation*}
    K_{h}=\frac{K_{g}}{\det(A)}~.
  \end{equation*}
\end{lemma}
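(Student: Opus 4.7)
The plan is to verify the connection formula by showing that $\tilde\nabla_u v := A^{-1}\nabla_u(Av)$ is torsion-free and compatible with $h$, and then to deduce the curvature formula by a direct comparison of curvature tensors in a $g$-orthonormal frame.

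Metric compatibility of $\tilde\nabla$ with $h$ should follow immediately by expanding $u\cdot h(v,w)=u\cdot g(Av,Aw)$ with the Leibniz rule and the metric compatibility of $\nabla$, then recognizing the two resulting pieces as $h(\tilde\nabla_u v,w)$ and $h(v,\tilde\nabla_u w)$ via the defining identity $A\tilde\nabla_u v=\nabla_u(Av)$. This step uses only the invertibility of $A$. Torsion-freeness is where the hypothesis $d^\nabla A=0$ enters: $\tilde\nabla_u v-\tilde\nabla_v u-[u,v]$ rewrites as $A^{-1}\bigl(\nabla_u(Av)-\nabla_v(Au)-A[u,v]\bigr)=A^{-1}(d^\nabla A)(u,v)$, which vanishes by assumption. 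Uniqueness of the Levi-Civita connection then identifies $\nabla^h$ with $\tilde\nabla$.

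For the curvature, I would iterate the connection formula to obtain the endomorphism identity $R^h(X,Y)Z=A^{-1}R^g(X,Y)(AZ)$; note that the Lie-bracket term of the curvature drops out cleanly from the recursion. In dimension $2$ the curvature tensor has the form $R^g(X,Y)Z=K_g(g(Y,Z)X-g(X,Z)Y)$, and the Gaussian curvature coincides with the sectional curvature
\[
K_h=\frac{h(R^h(X,Y)Y,X)}{h(X,X)h(Y,Y)-h(X,Y)^2}.
\]
Choosing a $g$-orthonormal frame $\{e_1,e_2\}$ at the point, a short calculation gives $h(R^h(e_1,e_2)e_2,e_1)=g(R^g(e_1,e_2)(Ae_2),Ae_1)=K_g\det A$, while $h_{ij}=g(Ae_i,Ae_j)$ is the matrix $A^{\!\top}\!A$, so the denominator equals $(\det A)^2$, and dividing yields $K_h=K_g/\det A$.

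Conceptually this lemma encodes the standard fact that pulling back a metric along a bundle automorphism transforms the Levi-Civita connection by conjugation, provided a closedness condition holds. The only nontrivial ingredient is $d^\nabla A=0$, which is precisely what is needed to kill the torsion; once the connection formula is in hand the curvature formula is a routine two-dimensional calculation, so I do not anticipate any genuine obstacle.
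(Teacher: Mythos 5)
Your proof is correct and complete. The paper does not actually prove this lemma---it is quoted from Proposition 3.12 of the cited reference on minimal surfaces---but your argument (checking that $A^{-1}\nabla_\cdot(A\cdot)$ is metric-compatible for $h$ and torsion-free precisely because $d^\nabla A=0$, deducing $R^h(X,Y)Z=A^{-1}R^g(X,Y)(AZ)$, and then evaluating the sectional curvature in a $g$-orthonormal frame to get the numerator $K_g\det A$ over the denominator $(\det A)^2$) is the standard proof, and every step checks out.
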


\begin{claim}\label{clm:Codazzi-b}
  The bundle morphism $b$ defined in Definition \ref{def:b} satisfies the Codazzi equation:
  \begin{equation*}
    d^{D^\#}b=0~,
  \end{equation*}
  where $D^\#$ is the Levi-Civita connection of $I^\#_+$.
\end{claim}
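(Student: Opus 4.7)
The plan is to apply Lemma \ref{prop:connection-curv} to the bundle morphism $A := E+JB$, which by construction realizes $I^\#_+$ as $I(A\cdot, A\cdot)$. That lemma will yield the explicit formula $D^\#_u w = A^{-1}D_u(Aw)$ for the Levi-Civita connection of $I^\#_+$, after which the Codazzi equation $d^{D^\#}b = 0$ should reduce to the Codazzi-type identity $d^D(J\dot B) = 0$ by factoring $A^{-1}$ out of each term in the definition of $d^{D^\#}b$, using the relation $Ab = J\dot B$ built into Definition \ref{def:b}.

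First, I would verify the hypotheses of Lemma \ref{prop:connection-curv} for $A$. Invertibility is immediate: on a $2$-dimensional space,
\begin{equation*}
\det(A) = 1 + \tr(JB) + \det(JB) = 1 + \det(B) = -K > 0,
\end{equation*}
using that $JB$ is traceless and that $K < -1$ by strong past-convexity. The hypothesis $d^D A = 0$ reduces to $d^D(JB) = 0$ (since $D$ is torsion-free, $d^DE = 0$), and this is a direct consequence of the Codazzi equation $d^DB = 0$ together with the standard fact that the complex structure $J$ associated to $I$ is parallel for $D$.

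Next I would derive the key identity $d^D(J\dot B) = 0$. Since the deformation preserves $I$, the Christoffel symbols of $D$ are unchanged at first order ($\dot D = 0$), so linearizing the Codazzi equations $d^{D_t}B_t = 0$ at $t=0$ gives $d^D\dot B = 0$, and parallelism of $J$ upgrades this to $d^D(J\dot B) = 0$. Plugging the formula $D^\#_u w = A^{-1}D_u(Aw)$ into the definition of $d^{D^\#}b$ and using $Ab = J\dot B$ termwise then yields
\begin{equation*}
(d^{D^\#}b)(u,v) = A^{-1}\bigl[D_u(J\dot B\,v) - D_v(J\dot B\,u) - J\dot B\,[u,v]\bigr] = A^{-1}\bigl(d^D(J\dot B)\bigr)(u,v) = 0.
\end{equation*}

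I do not anticipate any substantive obstacle: the argument is essentially a mechanical application of Lemma \ref{prop:connection-curv} once its hypotheses are verified for $A$. The only point requiring mild care is the linearization step, where $\dot D = 0$ (equivalently $\dot I = 0$) must be used to deduce $d^D\dot B = 0$ from the family of Codazzi equations; everything else is formula manipulation.
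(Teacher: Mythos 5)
Your proposal is correct and follows essentially the same route as the paper: apply Lemma \ref{prop:connection-curv} to $A=E+JB$ to get $D^\#_u w=A^{-1}D_u(Aw)$, factor $A^{-1}$ out of $d^{D^\#}b$ using $Ab=J\dot B$, and conclude from $d^D(J\dot B)=0$. Your verification of the hypotheses of the lemma (invertibility of $A$ via $\det(A)=-K>0$ and $d^DA=0$) and your explicit linearization argument for $d^D\dot B=0$ are details the paper leaves implicit, but the argument is the same.
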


\begin{proof}
  By Lemma \ref{prop:connection-curv}, the Levi-Civita connection of $I^\#_+ = I((E+JB)\cdot, (E+JB)\cdot)$ is
  $$ D^\#_uv=(E+JB)^{-1}D_u((E+JB)v)~. $$

  Combined this with the definition of $d^{D^\#}b$, we obtain
  \begin{equation*}
    \begin{split}
      (d^{D^\#}b)(u,v) & =D^\#_u(bv)-D^\#_v(bu)-b([u,v])\\
                       &=(E+JB)^{-1}D_u((E+JB)bv)-(E+JB)^{-1}D_v((E+JB)bu)-b([u,v])\\
                       &=(E+JB)^{-1}D_u(J\dot{B}v)-(E+JB)^{-1}D_v(J\dot{B}u)-(E +JB)^{-1}J\dot{B}([u,v])\\
                       &=(E+JB)^{-1}\big(D_u(J\dot{B}v)-D_v(J\dot{B}u)-J\dot{B}([u,v])\big)\\
                       &=(E+JB)^{-1}J(d^D\dot{B})(u,v)\\
                       &=0~.
    \end{split}
  \end{equation*}
  The third equality uses the definition of $b$, while the last equality uses the fact that $B_t$ satisfies the Codazzi equation.
\end{proof}

\begin{claim}\label{clm:b-v}
Under the assumptions of Lemma \ref{st:ads}, there exists a function $\mu:\Sigma\to \R$ such that the bundle morphism $b$ defined in Definition \ref{def:b} satisfies
\begin{equation}\label{eq:b-assump}
b=D^\# v+\mu J^\#~,
\end{equation}
where $J^\#$ is the complex structure of $I^\#_+$.
\end{claim}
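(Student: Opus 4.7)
The plan is straightforward and essentially formal. I would first translate the hypothesis that the first-order variation $\dot I^\#_+$ is ``induced by a vector field $v$'' into the infinitesimal identity
$$\dot I^\#_+ \;=\; \mathcal{L}_v I^\#_+~,$$
which is the standard first-order version of $I^\#_+$ being pulled back by the flow of $v$. Rewriting the Lie derivative of a Riemannian metric using its Levi-Civita connection $D^\#$, this reads
$$\dot I^\#_+(X,Y) \;=\; I^\#_+(D^\#_X v,\,Y) + I^\#_+(X,\,D^\#_Y v)~,$$
for all tangent vector fields $X,Y$ on $\Sigma$.

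Next I would compare this with the identity \eqref{eq:var-1} already obtained for $\dot I^\#_+$ in terms of $b$, namely $\dot I^\#_+ = I^\#_+(b\,\cdot,\,\cdot) + I^\#_+(\cdot,\,b\,\cdot)$. Writing $D^\# v$ for the bundle endomorphism $X\mapsto D^\#_X v$ and setting $A := b - D^\# v$, the two expressions coincide exactly when
$$I^\#_+(AX,Y) + I^\#_+(X,AY) \;=\; 0~,$$
that is, when $A$ is skew-adjoint with respect to $I^\#_+$.

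To finish, I would invoke the elementary fact that on an oriented $2$-dimensional Riemannian surface, any $I^\#_+$-skew-adjoint endomorphism of the tangent bundle is pointwise a real multiple of the complex structure $J^\#$ associated to $I^\#_+$. This produces a smooth function $\mu:\Sigma\to\R$ such that $A = \mu J^\#$, which is precisely $b = D^\# v + \mu J^\#$, as claimed in \eqref{eq:b-assump}.

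I do not anticipate a genuine obstacle here: the only point meriting a brief justification is the translation of ``trivial variation, induced by a vector field'' into the Lie-derivative formula, which is the infinitesimal version of isotopy-equivalence of metrics; and the smoothness of $\mu$ follows from the smoothness of $b$, $v$ and the metric $I^\#_+$ together with the one-dimensionality of the space of $I^\#_+$-skew-adjoint endomorphisms of each tangent plane.
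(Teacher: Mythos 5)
Your proposal is correct and follows essentially the same route as the paper: identify $\dot I^\#_+=\mathcal{L}_vI^\#_+$, express the Lie derivative via $D^\#$, compare with \eqref{eq:var-1} to conclude that $b-D^\#v$ is anti-self-adjoint for $I^\#_+$, and use that such endomorphisms of an oriented surface are pointwise multiples of $J^\#$. No gaps.
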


\begin{proof}
Under the assumptions of Lemma \ref{st:ads}, the variation of $I^\#_+$ is induced by a vector field $v$ on $\Sigma$, that is, $\dot I^\#_+=\cL_vI^\#_+$. A direct computation shows that
$$\cL_vI^\#_+ = I^\#_+(D^\#_\cdot v,\cdot) + I^\#_+(\cdot, D^\#_\cdot v)~.$$
Combining \eqref{eq:var-1}:
$\dot I^\# = I^\#_+(b\cdot, \cdot)+I^\#_+(\cdot, b\cdot)$, this means that $b$ and $D^\# v$ have the same self-adjoint component, or in other terms $b-D^\#v$ is anti-self-adjoint for $I^\#$. 
So there exists a function $\mu:\Sigma\to \R$ such that
$$ b-D^\# v=\mu J^\#~. $$
The claim follows.
\end{proof}

The Codazzi equation applied to $b$ and the fact that $I^\#_+$ has constant curvature $-1$ then lead to the following lemma.

\begin{lemma}\label{lm:dD-computation}
  The vector field $v$ on $\Sigma$ and the function $\mu:\Sigma\to \R$ in Claim \ref{clm:b-v} satisfy
  $$v + J^\# D^\# \mu=0~.$$
\end{lemma}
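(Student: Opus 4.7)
The plan is to apply the Codazzi equation $d^{D^\#}b=0$ established in Claim \ref{clm:Codazzi-b} to the explicit expression $b=D^\# v+\mu J^\#$ given by Claim \ref{clm:b-v}, and to extract an algebraic relation between $v$ and $D^\#\mu$ from the resulting identity.

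First I would expand $d^{D^\#}b$ as the sum $d^{D^\#}(D^\# v)+d^{D^\#}(\mu J^\#)$. The first term is exactly the standard expression of the Riemann curvature tensor applied to $v$: for any vector fields $X,Y$ on $\Sigma$,
\begin{equation*}
(d^{D^\#}(D^\# v))(X,Y)=D^\#_X D^\#_Y v-D^\#_Y D^\#_X v-D^\#_{[X,Y]}v=R^\#(X,Y)v,
\end{equation*}
where $R^\#$ is the Riemann curvature tensor of $I^\#_+$. Since $I^\#_+$ is a hyperbolic metric (constant curvature $-1$), $R^\#(X,Y)v=I^\#_+(X,v)Y-I^\#_+(Y,v)X$.

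For the second term I would use the fact that $J^\#$ is parallel for $D^\#$. Expanding the definition,
\begin{equation*}
(d^{D^\#}(\mu J^\#))(X,Y)=X(\mu)J^\# Y-Y(\mu)J^\# X=I^\#_+(D^\#\mu,X)J^\# Y-I^\#_+(D^\#\mu,Y)J^\# X,
\end{equation*}
after cancelling the terms involving $D^\#_X Y-D^\#_Y X-[X,Y]=0$.

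The main (and only) step that requires a small argument is the simplification of these two tensor expressions on the oriented surface $\Sigma$. I would evaluate both at a local $I^\#_+$-orthonormal frame $(e_1,e_2)$ with $J^\# e_1=e_2$, $J^\# e_2=-e_1$. A direct computation gives
\begin{equation*}
(d^{D^\#}(D^\# v))(e_1,e_2)=J^\# v,\qquad (d^{D^\#}(\mu J^\#))(e_1,e_2)=-D^\#\mu.
\end{equation*}
The vanishing of $d^{D^\#}b$, together with skew-symmetry in $(X,Y)$ (so that evaluation on $(e_1,e_2)$ determines the tensor), therefore forces $J^\# v-D^\#\mu=0$, i.e. $J^\# v=D^\#\mu$. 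Applying $J^\#$ to both sides and using $(J^\#)^2=-E$ yields $v+J^\# D^\#\mu=0$, as desired.

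The only real subtlety is checking that the constant $-1$ curvature of $I^\#_+$ (guaranteed because $I^\#_+$ is isotopic to the hyperbolic left metric $h_L$) is exactly what is needed so that the curvature contribution from $d^{D^\#}(D^\# v)$ cancels properly against $d^{D^\#}(\mu J^\#)$; with any other constant, one would pick up an extra multiple of $v$ and the clean conclusion would fail. Everything else is routine tensor calculus in two dimensions.
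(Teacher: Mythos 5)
Your proof is correct and follows essentially the same route as the paper: expand $d^{D^\#}b$ into the curvature term $R^\#(\cdot,\cdot)v$ coming from $d^{D^\#}(D^\# v)$ and the term $d\mu(x)J^\# y-d\mu(y)J^\# x$ coming from $d^{D^\#}(\mu J^\#)$, use that $I^\#_+$ has constant curvature $-1$, and evaluate on an oriented orthonormal frame to get $J^\# v-D^\#\mu=0$. The computations and the role of the constant $-1$ match the paper's argument exactly.
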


\begin{proof}
  We compute $d^{D^\#}$ of each term of the right-hand side of \eqref{eq:b-assump}. For all $x,y\in T_p\Sigma$ with $p\in\Sigma$,
  \begin{eqnarray*}
    (d^{D^\#}D^\#v)(x,y) & = & D^\#_xD^\#_yv-D^\#_yD^\#_xv-D^\#_{[x,y]}v \\
                       & = & (-K^\#J^\# v) da^\#(x,y) \\
                       & = & (J^\# v) da^\#(x,y)~, \\
    (d^{D^\#}\mu J^\#)(x,y) & = & D^\#_x(\mu J^\#y) - D^\#_y(\mu J^\# x) - \mu J^\#[x,y] \\
                       & = & d\mu(x)J^\#y - d\mu(y)J^\#x~. \\
    & = & -(D^\#\mu) da^\#(x,y)~,
  \end{eqnarray*}
  where $K^\#$ is the Gaussian curvature of $I^{\#}_+$ and $da^\#$ is the area element of $I^\#_+$.

  Putting the two terms together and combining with Claim \ref{clm:Codazzi-b} and \eqref{eq:b-assump}, the result follows.
\end{proof}

Replacing this in the expression \eqref{eq:b-assump} of $b$ then leads to the following.

\begin{claim}\label{clm:b-mu}
   Under the assumptions of Lemma \ref{st:ads}, the bundle morphism $b$ defined in Definition \ref{def:b} has the following expression:
  \begin{equation}
    \label{eq:b-new}
    b = J^\# (-D^\# D^\# \mu + \mu E)~,
  \end{equation}
  where $\mu$ is the function given in Claim \ref{clm:b-v}.
\end{claim}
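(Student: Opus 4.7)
The claim is essentially a substitution: Claim \ref{clm:b-v} expresses $b$ in terms of the vector field $v$ and the function $\mu$ as $b = D^\# v + \mu J^\#$, and Lemma \ref{lm:dD-computation} expresses $v$ in terms of $\mu$ alone as $v = -J^\# D^\# \mu$. Plugging the second identity into the first will give the desired formula, provided one knows how the derivative $D^\#$ interacts with the complex structure $J^\#$.

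First I would recall that $J^\#$ is the complex structure associated with $I^\#_+$, hence it is parallel with respect to the Levi-Civita connection $D^\#$ of $I^\#_+$ (a standard fact: on a Riemann surface, the complex structure is parallel for any compatible Riemannian metric, since $J^\#$ rotates by $\pi/2$ and $D^\#$ preserves both the metric and the orientation). Thus $D^\#(J^\# X) = J^\# D^\# X$ for every vector field $X$ on $\Sigma$.

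Then I would compute directly:
\begin{equation*}
D^\# v = D^\#(-J^\# D^\# \mu) = -J^\# D^\# D^\# \mu~,
\end{equation*}
using the parallelism of $J^\#$. Substituting back into $b = D^\# v + \mu J^\#$ from Claim \ref{clm:b-v} yields
\begin{equation*}
b = -J^\# D^\# D^\# \mu + \mu J^\# = J^\#\bigl(-D^\# D^\# \mu + \mu E\bigr)~,
\end{equation*}
which is exactly \eqref{eq:b-new}.

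There is no serious obstacle: all the work has been done in the preceding claims and lemmas. The only point meriting explicit mention is the parallelism $D^\# J^\# = 0$, which justifies pulling $J^\#$ past the covariant derivative. Beyond that the proof is a one-line substitution.
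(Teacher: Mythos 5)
Your proof is correct and follows exactly the route the paper intends: the paper states only that ``replacing this in the expression \eqref{eq:b-assump} of $b$'' gives the claim, and your substitution of $v=-J^\# D^\# \mu$ into $b=D^\# v+\mu J^\#$, using the parallelism $D^\# J^\#=0$ to commute $J^\#$ past the covariant derivative, is precisely that computation with the one nontrivial detail made explicit.
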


\begin{lemma}
Under the assumptions of Lemma \ref{st:ads}, we have
\begin{equation}\label{eq:b6}
   \tr(JBJ^\#(-D^\# D^\# \mu+\mu E))=0~.
\end{equation}
\end{lemma}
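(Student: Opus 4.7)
The plan is to observe that equation \eqref{eq:b6} is a direct consequence of combining two results already established in the excerpt: the algebraic identity \eqref{eq:b5} from Lemma \ref{lm:equiv-equa}, and the explicit expression for the bundle morphism $b$ obtained in Claim \ref{clm:b-mu}.

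More precisely, I would proceed as follows. First, I recall that Lemma \ref{lm:Codazzi-Gauss} established equations \eqref{eq:b1} and \eqref{eq:b2} purely from the self-adjointness of $\dot B$ (arising from $\dot I=0$) and the linearized Gauss equation. Lemma \ref{lm:equiv-equa} then reformulated this pair of equations, using Remark \ref{eq:JB-inverse}, as the equivalent system \eqref{eq:b4}--\eqref{eq:b5}. In particular, we have at our disposal the identity
\begin{equation*}
  \tr(JBb) = 0~.
\end{equation*}
These facts hold under the assumptions of Lemma \ref{st:ads}, since they used only that $\dot I=0$, that $\dot B$ satisfies the Codazzi and linearized Gauss equations, and that $\Sigma$ is strongly past-convex (so that $K+1<0$ and $B$ is invertible).

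Next, I use Claim \ref{clm:b-mu}, which, under the same hypotheses of Lemma \ref{st:ads} (and additionally using that the variation $\dot I^\#_+$ is trivial, i.e.\ induced by a vector field $v$), provides the explicit expression
\begin{equation*}
  b = J^\#(-D^\# D^\# \mu + \mu E)~.
\end{equation*}
Substituting this expression into the identity $\tr(JBb)=0$ yields immediately
\begin{equation*}
  \tr\bigl(JB\, J^\#(-D^\# D^\# \mu + \mu E)\bigr) = 0~,
\end{equation*}
which is exactly \eqref{eq:b6}.

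Since this reduces to a one-line substitution, there is no real obstacle here; the conceptual work has already been done in proving Lemma \ref{lm:Codazzi-Gauss}, Lemma \ref{lm:equiv-equa}, and Claim \ref{clm:b-mu}. The role of this lemma in the larger argument is to rewrite the trace condition \eqref{eq:b5} as a scalar PDE in the unknown function $\mu$ (with coefficients depending on the geometry of $\Sigma$ and on the comparison between the two complex structures $J$ and $J^\#$), which will then presumably be coupled with the analogous consequence of \eqref{eq:b4} to deduce $\mu\equiv 0$, and hence $b=0$ and $\dot B=0$, completing the infinitesimal rigidity argument.
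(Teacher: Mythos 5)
Your proof is correct and matches the paper's own argument, which likewise just substitutes the expression for $b$ from Claim \ref{clm:b-mu} into the identity $\tr(JBb)=0$ of equation \eqref{eq:b5}. (Minor aside: the paper notes that \eqref{eq:b4} is automatically satisfied because the expression \eqref{eq:b-new} makes $b$ traceless, so the rigidity argument proceeds from \eqref{eq:b6} alone via the maximum principle rather than by coupling with a second equation.)
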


\begin{proof}
This follows directly from Equation \eqref{eq:b5}, with the expression of $b$ in the previous claims.
\end{proof}

It can be noted that \eqref{eq:b-new} implies that $b$ is traceless, so \eqref{eq:b4} is always satisfied.

\begin{lemma}\label{lm:neg-def}
  The operator $JBJ^\#$ in \eqref{eq:b6} is self-adjoint for $I^\#_+$ and negative definite. More precisely, if $(e_1, e_2)$ is an orthonormal basis for $I$ of eigenvectors of $B$, with eigenvalues $k_1, k_2$ then $((E+JB)^{-1}(e_1),(E+JB)^{-1}(e_2))$ is an orthonormal basis of $I^\#_+$ of eigenvectors of $JBJ^\#$, with eigenvalues $-k_2, -k_1$.
\end{lemma}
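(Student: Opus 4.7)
The plan is to exhibit an explicit $I^\#_+$-orthonormal frame in which $JBJ^\#$ is diagonal; once this frame is in hand, the self-adjointness and negative definiteness are automatic, and the eigenvalues are exactly those announced in the statement.

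First, I would fix an $I$-orthonormal frame $(e_1,e_2)$ of eigenvectors of $B$ with eigenvalues $k_1,k_2$, oriented so that $Je_1=e_2$ and $Je_2=-e_1$. Strong past-convexity of $\Sigma$ gives $k_1,k_2>0$. Setting $A=E+JB$, I would then verify that $A$ is invertible and orientation-preserving: since $\tr(JB)=0$, the Cayley--Hamilton formula gives $\det(A)=1+\det(JB)=1+\det(B)$, which by the Gauss equation equals $-K$, and this is positive because $K<-1$.

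I would then define $f_i:=A^{-1}e_i$. The defining relation $I^\#_+(\cdot,\cdot)=I(A\cdot,A\cdot)$ makes $(f_1,f_2)$ automatically $I^\#_+$-orthonormal, and $\det(A)>0$ ensures that $A^{-1}$ preserves orientation, which forces $J^\# f_1=f_2$ and $J^\# f_2=-f_1$. The key algebraic observation is then that $JB$ commutes with $A=E+JB$, hence with $A^{-1}$, so $JBf_i=A^{-1}(JBe_i)$; this immediately gives $JBf_1=k_1 f_2$ and $JBf_2=-k_2 f_1$. Composing with $J^\#$ yields $JBJ^\# f_1=-k_2 f_1$ and $JBJ^\# f_2=-k_1 f_2$, so $JBJ^\#$ is diagonal in the $I^\#_+$-orthonormal basis $(f_1,f_2)$ with eigenvalues $-k_2,-k_1$. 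Diagonality in an orthonormal basis delivers the self-adjointness, and $k_1,k_2>0$ delivers the negative definiteness.

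The main (and essentially the only) delicate point I anticipate is the orientation bookkeeping in the construction of $(f_1,f_2)$: one has to know that $J^\# f_1$ equals $+f_2$ rather than $-f_2$, which is exactly what $\det(A)=-K>0$ guarantees via the uniqueness of the compatible complex structure on an oriented Riemannian surface. Everything else reduces to a short linear-algebra computation in the adapted frame.
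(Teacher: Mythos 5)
Your proof is correct and follows essentially the same route as the paper: both conjugate the $I$-orthonormal eigenframe of $B$ by $(E+JB)^{-1}$ and use that $JB$ commutes with $(E+JB)^{-1}$ to diagonalize $JBJ^\#$ in the resulting $I^\#_+$-orthonormal frame. The only cosmetic difference is that you pin down $J^\#$ on the new frame via $\det(E+JB)=-K>0$ and the uniqueness of the orientation-compatible complex structure, whereas the paper just applies the formula $J^\#=(E+JB)^{-1}J(E+JB)$; both give $J^\#f_1=f_2$ and $J^\#f_2=-f_1$.
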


\begin{proof}
   Recall that $J^\#=(E+JB)^{-1}J(E+JB)$ and $I^\#_+=I((E+JB)\cdot,(E+JB)\cdot)$. Since $B$ is self-adjoint for $I$, and $J$ is the complex structure of $I$, a direct computation shows that
    $$I^\#_+(JBJ^\#\cdot,\cdot)=I^\#_+(\cdot,JBJ^\#\cdot)~,$$
    which means that $JBJ^\#$ is self-adjoint for $I^\#_+$.

Let $E_i=(E+JB)^{-1}(e_i), i=1,2$. Then
  $$ J^\# E_i=(E+JB)^{-1} Je_i~. $$
  However, $JB$ commutes with $E+JB$ and therefore also with $(E+JB)^{-1}$. As a consequence,
  $$ JBJ^\# E_i=JB(E+JB)^{-1} Je_i=(E+JB)^{-1}JBJe_i~. $$
  Clearly, $JBJe_1=-k_2e_1$ while $JBJe_2=-k_1e_2$. Then
   $$ JBJ^\# E_1=-k_2(E+JB)^{-1}e_1=-k_2E_1~,$$
   and $$ JBJ^\# E_2=-k_1(E+JB)^{-1}e_2=-k_1E_2~.$$
  The result follows.
\end{proof}

\begin{remark}
Another (simpler) way to obtain the result is by checking that
$$ JBJ^\# = (E+JB)^{-1}JBJ(E+JB)~. $$
\end{remark}

As a consequence, the operator $\mu \mapsto \tr(JBJ^\#(-D^\# D^\# \mu+\mu E))$ is elliptic. This makes it possible to apply the maximum principle and obtain the infinitesimal rigidity statement needed.

\begin{corollary}\label{cor:max-princ}
  Under the assumptions of Lemma \ref{st:ads}, there is no non-zero solution of \eqref{eq:b6}.
\end{corollary}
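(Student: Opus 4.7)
The plan is to read equation \eqref{eq:b6} as a linear elliptic partial differential equation in the unknown function $\mu:\Sigma\to\R$, and conclude by a direct application of the maximum principle on the compact surface $\Sigma$. Expanding the trace, I would first rewrite \eqref{eq:b6} as
$$L\mu \;:=\; -\tr\bigl(JBJ^\#\, D^\# D^\# \mu\bigr) \;+\; \mu\, \tr(JBJ^\#) \;=\; 0~,$$
which is a linear second-order operator acting on $\mu$, with coefficients determined by the embedding data of $\Sigma$ in $M$.

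Next, I would verify that $L$ is elliptic and has a zeroth-order coefficient of the correct sign. By Lemma \ref{lm:neg-def}, the operator $A:=JBJ^\#$ is self-adjoint for $I^\#_+$ with eigenvalues $-k_2,-k_1$, where $k_1,k_2$ are the principal curvatures of $\Sigma$. The strong past-convexity assumption gives $k_1, k_2>0$, so $A$ is negative definite for $I^\#_+$, and in particular $\tr(A)=-(k_1+k_2)<0$. The principal symbol of $L$ at a nonzero covector $\xi$ is $-\langle A\xi,\xi\rangle_{I^\#_+}>0$, so $L$ is uniformly elliptic on $\Sigma$.

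Finally, I would run the maximum principle. Suppose, for contradiction, that $\mu$ attains a strictly positive maximum at some $p\in\Sigma$. At $p$, the Hessian $D^\# D^\# \mu$ is negative semi-definite. Since $A$ is negative definite and self-adjoint for $I^\#_+$, the standard fact that the trace of the product of a positive definite and a positive semi-definite self-adjoint operator is nonnegative (proved by diagonalizing one of the two in an $I^\#_+$-orthonormal basis) gives $\tr\bigl(A\cdot D^\# D^\#\mu\bigr)\geq 0$ at $p$. Combined with $\mu(p)\tr(A)<0$, this yields $L\mu(p)<0$, contradicting $L\mu\equiv 0$. A symmetric argument at a strictly negative minimum is ruled out the same way. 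Hence $\mu\equiv 0$, which is the content of the corollary.

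I do not expect a real obstacle: the preceding lemmas have already done all the analytic work, reducing the problem to a scalar equation and pinning down both the signature of $JBJ^\#$ (hence ellipticity) and the sign of its trace (hence the zeroth-order sign condition needed for the maximum principle). The hypothesis of strong past-convexity enters exactly once, through the positivity of the principal curvatures $k_1,k_2$, and this is what makes the argument go through; in its absence $A$ would fail to be definite and both the ellipticity of $L$ and the favourable sign of $\tr(A)$ would be lost.
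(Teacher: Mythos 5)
Your proposal is correct and follows essentially the same route as the paper: both use Lemma \ref{lm:neg-def} to see that $JBJ^\#$ is negative definite and self-adjoint for $I^\#_+$, then evaluate the sign of each term of \eqref{eq:b6} at a maximum and a minimum of $\mu$ on the closed surface $\Sigma$ to conclude $\mu\leq 0$ and $\mu\geq 0$. The only cosmetic difference is that you phrase it as a contradiction at a strictly positive maximum (and strictly negative minimum) and additionally record the ellipticity of the operator, which the paper notes but does not actually need beyond the pointwise sign argument.
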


\begin{proof}
  Let $\mu$ be a solution of \eqref{eq:b6}. Since $\mu$ is a smooth function on a closed surface $\Sigma$, $\mu$ achieves the maximum (resp. minimum) on $\Sigma$, say at $x_0$ (resp. $y_0$), and thus has a local maximum (resp. minimum) at $x_0$ (resp. $y_0$). By Lemma \ref{lm:neg-def}, at the local maximum $x_0$ of $\mu$, $\tr(JBJ^\#(-D^\# D^\#\mu))\leq 0$. Combined with \eqref{eq:b6}, we have $\tr(JBJ^\#(\mu E))\geq 0$ at $x_0$. Using Lemma \ref{lm:neg-def} again, it follows that $\mu(x_0)\leq 0$ and so $\mu\leq 0$ everywhere.

   On the other hand, at the local minimum $y_0$ of $\mu$, $\tr(JBJ^\#(-D^\# D^\#\mu))\geq 0$. Similarly as above, we can show that $\mu(y_0)\geq 0$ and so $\mu\geq 0$ everywhere. So $\mu=0$.
\end{proof}

\begin{proof}[Proof of Lemma \ref{st:ads}]
Combining Corollary \ref{cor:max-princ}, Definition \ref{def:b} and \eqref{eq:b-new}, we have $J\dot B=(E+JB)b=0.$  Lemma \ref{st:ads} follows.
\end{proof}

Before showing Proposition \ref{prop:local-rigidity}, we introduce the following proposition which ensures the existence and the uniqueness (up to isometries) of the maximal extension of a GHC AdS spacetime (see \cite[Theorem 3]{Choquet-Bruhat1969}).

\begin{proposition}
  \label{uniqueness of maximal extension}
  Let $(M,g)$ be a GHC AdS spacetime. There exists a unique (considered up to isometries) MGHC AdS spacetime $(M',g')$ with particles, called the maximal extension of $(M,g)$, in which $(M,g)$ can be isometrically embedded.
\end{proposition}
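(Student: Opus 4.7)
The plan is to adapt the classical Choquet-Bruhat--Geroch construction of maximal globally hyperbolic developments to the AdS setting; since the vacuum Einstein equations with negative cosmological constant are locally rigid (any AdS spacetime is locally isometric to a piece of $\AdS^3$), the argument is in fact somewhat simpler than in the generic Einstein case. First I would consider the collection $\mathcal{F}$ of all GHC AdS spacetimes $(N,h)$ equipped with an isometric embedding $\iota_N: M \hookrightarrow N$ identifying a chosen Cauchy surface $\Sigma$ of $M$ with a Cauchy surface of $N$. Partially order $\mathcal{F}$ by declaring $(N_1,h_1) \leq (N_2,h_2)$ whenever there is an isometric embedding $N_1 \hookrightarrow N_2$ compatible with $\iota_{N_1}$ and $\iota_{N_2}$.

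For existence I would apply Zorn's lemma. Given a totally ordered chain in $\mathcal{F}$, one takes the direct limit equipped with the canonical smooth and Lorentzian structure inherited from the factors. The resulting space is still an AdS spacetime (local isometries to $\AdS^3$ glue compatibly), and global hyperbolicity is inherited since $\Sigma$ remains a Cauchy surface in the limit. Zorn then produces a maximal element $(M',g')$, which is the candidate maximal extension.

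For uniqueness, given two maximal extensions $(M_1', g_1')$ and $(M_2', g_2')$, I would consider pairs $(U, \varphi)$ where $U \subset M_1'$ is open, contains $\iota_{M_1'}(M)$, and $\varphi: U \to M_2'$ is an isometric embedding satisfying $\varphi \circ \iota_{M_1'} = \iota_{M_2'}$. By Zorn there is a maximal such $(U,\varphi)$. The goal is to prove $U = M_1'$ and $\varphi(U) = M_2'$. The local extension step at boundary points of $U$ uses the fact that AdS geometry is locally rigid: small geodesically convex AdS charts on either side match after the isometry is propagated by analytic continuation along curves.

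The main obstacle is the standard non-Hausdorff pushout issue: propagating $\varphi$ past $\partial U$ could \emph{a priori} produce a non-Hausdorff gluing of $M_1'$ and $M_2'$, preventing one from combining them into a larger GHC AdS spacetime to contradict maximality. The classical resolution, due to Geroch, is that any two non-Hausdorff points in the pushout would have identical chronological pasts, contradicting the presence of a common Cauchy surface. This argument is purely causal-theoretic and transfers to the AdS setting without modification. Once non-Hausdorff pairs are excluded, the pushout $M_1' \sqcup_\varphi M_2'$ is itself a GHC AdS spacetime extending both $(M_1',g_1')$ and $(M_2',g_2')$, forcing $\varphi$ to be a global isometry by maximality of both sides.
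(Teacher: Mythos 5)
The paper does not actually prove this proposition: it is quoted directly from the literature, with the reference \cite[Theorem 3]{Choquet-Bruhat1969} (Choquet-Bruhat--Geroch), so there is no in-paper argument to compare against. Your proposal is essentially a reconstruction of that classical proof, specialized to constant curvature, and the overall strategy (Zorn for existence over the poset of GHC extensions of a fixed Cauchy surface, then a maximal common isometric extension plus exclusion of non-Hausdorff pairs for uniqueness) is the right one. Two remarks. First, the non-Hausdorff exclusion is the genuinely delicate step, and your one-line gloss of it is not quite how the argument goes: the standard resolution shows that the set of non-separated pairs sits in the boundary of the maximal common extension and is achronal/closed, and then uses local rigidity to push the isometry $\varphi$ across a boundary point, contradicting maximality of $(U,\varphi)$; it is not simply that non-Hausdorff points would share a chronological past with a Cauchy surface present. (The modern treatment of Sbierski even removes Zorn from the uniqueness part; either route works here, and the local extension step is indeed easier than in the general Einstein case because an AdS spacetime is by definition locally isometric to $\AdS^3$.) Second, in this specific three-dimensional constant-curvature setting there is a more concrete alternative due to Mess: the holonomy $\rho=(\rho_L,\rho_R)$ of a GHC AdS spacetime with closed Cauchy surface determines an explicit maximal domain of dependence in $\AdS^3$ whose quotient is the maximal extension, so existence and uniqueness can be read off without any Zorn argument. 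Finally, note that the phrase ``with particles'' in the statement is a leftover from the authors' work on spacetimes with conical singularities and should be ignored, as you implicitly do.
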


We are now ready to prove Proposition \ref{prop:local-rigidity}.

\begin{proof}[Proof of Proposition \ref{prop:local-rigidity}]
  Let $\dot{g}$ be a first-order deformation of $(M,g)\in \QF$ which preserves the left metric $I^\#_+$ and the induced metric $I$ on the strongly past-convex spacelike surface $\Sigma$ at first order. Assume that $\dot{g}$ is given by a one-parameter family $(M,g_t)_{t\in [0,\epsilon]}$ of quasifuchsian AdS spacetimes in $\QF$. The induced left metrics are denoted by $I^\#_{+,t}$, and the induced embedding data of $\Sigma$ are denoted by $(I_t,B_t)$. Up to a diffeomorphism of $M$ isotopic to the identity, we can assume that the induced metric $I_t$ on $\Sigma$ is fixed at first order (i.e. $\dot{I}=0$) and the first-order variation of $I^\#_+$ is induced by a vector field say $v$ on $\Sigma$ (which means that $\dot{I}^\#_+=\cL_vI^\#_+$). Then the assumptions of Lemma \ref{st:ads} are satisfied and we have $\dot{B}=0$.

To show that $\dot{g}=0$, we first claim that the embedding data $(\Sigma,I_t,B_t)$ uniquely determines a quasifuchsian AdS metric (which is $g_t$). Indeed, consider the manifold $\Sigma\times (-\frac{\pi}{2},0]$ with the following metric:
        \begin{equation*}
            h_t=-ds^{2}+I_t((\cos(s)E+\sin(s)B_t)\cdot,(\cos(s)E+\sin(s)B_t)\cdot)~,
        \end{equation*}
        where $E$ is the identity isomorphism on $T\Sigma$ and $s\in(-\frac{\pi}{2},0]$. Note that for each $s\in (-\frac{\pi}{2},0]$, the surface $\Sigma\times \{s\}$ is the equidistant surface at distance $s$ from the surface $\Sigma\times\{0\}$ on the convex side. The Lorentzian metric $h_t$ is a GHC AdS metric on $\Sigma\times(-\frac{\pi}{2},0]$.

        By Proposition \ref{uniqueness of maximal extension}, there exists a unique maximal extension of the AdS spacetime $(\Sigma\times(-\frac{\pi}{2},0], h_t)$, which is a MGHC AdS spacetime such that the restriction of its metric to the $\Sigma\times (-\frac{\pi}{2},0]$ (identified with a subset of $M$) is exactly $h_t$. Moreover, this maximal extension is $(M,g_t)$. Note that $\dot{I}=\dot{B}=0$. Taking the derivative of $h_t$ at $t=0$ shows that $\dot{h}=0$ and $\dot{g}$ is therefore zero when restricted to the subset $\Sigma\times (-\frac{\pi}{2},0]$. By the uniqueness of the maximal extension, $\dot{g}$ is therefore zero on the whole $M$. The proposition follows.
\end{proof}

\subsection{Proof of Theorem \ref{thm:closed-surf}}

To show Theorem \ref{thm:closed-surf}, we first construct the following map.

Let $h$ be a complete Riemannian metric of curvature $K<-1$ on $S$ and let $\cV(h)\subset\QF$ denote the space of quasifuchsian AdS spacetimes which contains a past-convex spacelike surface with induced metric isotopic to $h$.

\begin{definition}
Let $\Phi_h:\cV(h)\rightarrow \cT_S$ be the map which sends a manifold $M\in\cV(h)$ to its left metric $h_L$, identified with the metric $I^\#_+$ on $S$.
\end{definition}

We aim to show that $\Phi_h$ is a homeomorphism. Note that $\cV(h)$ can be alternatively interpreted as the space of isometric equivariant embeddings of $(S,h)$ into $\AdS^3$ (denoted by $\cI(S,h)$, see e.g. \cite[Section 3]{tamburelli2016}), which are given by a couple $(f,\rho)$, where $f:(\tilde{S},\tilde{h})\rightarrow \AdS^3$ is an isometric embedding into $\AdS^3$ with $(\tilde{S},\tilde{h})$ a universal Riemannian cover of $(S,h)$, and $\rho:\pi_1(S)\rightarrow {\rm PSL(2,\R)\times{\rm PSL}(2,\R)}$ is the representation such that $f(\gamma x)=\rho(\gamma)\circ f(x)$ for all $\gamma\in\pi_1(S)$ and $x\in\tilde{S}$.

It was shown by Tamburelli \cite[Lemma 3.2]{tamburelli2016} that $\cI(S,h)$, and thus the space $\cV(h)$ here, is a manifold of dimension $6\mathfrak{g}-6$, where $\mathfrak{g}$ is the genus of $S$, by identifying $\cI(S,h)$ with the space of solutions of Gauss-Codazzi equations and using classical techniques of elliptic operators. Moreover, he showed that $\Phi_h$ is proper \cite[Corollary 7.4]{tamburelli2016}.

\begin{proof}[Proof of Theorem \ref{thm:closed-surf}]
  Note that $\cV(h)$ has (real) dimension $6\mathfrak{g}-6$, which is the same as that of $\cT_S$. Combined with \ref{prop:local-rigidity}, the differential $d\Phi_h$ is an isomorphism. So $\Phi_h$ is a local homeomorphism. Combined with the fact that $\Phi_h$ is proper \cite[Corollary 7.4]{tamburelli2016}, $\Phi_h$ is a covering map of degree say $d(h)$. It remains to show that $d(h)=1$.

  Let $h_0$ be a metric with curvature $K<-1$ on $S$, and let $h_1$ be another metric on $S$, of constant curvature $K<-1$. As already mentioned, it follows from \cite[Theorem 1.15]{cyclic} that $\Phi_{h_1}:\cV(h_1)\to \cT_S$  is one-to-one. Let $(h_t)_{t\in[0,1]}$ be a smooth one-parameter family of Riemannian metrics on $S$ with curvature less than $-1$ (considered up to isotopy) connecting $h_0$ to $h_1$.

  Let
  $$ \cV = \{ (t,B_t)~|~ t\in [0,1], B_t\in \cV(h_t)\}~, $$
  and let $\Phi:\cV\to [0,1]\times \cT_S$ be the function defined by
  $$ \Phi(t,B_t) = (t, \Phi_{h_t}(B_t))~. $$
  We claim that at each point $(t,B_t)\in \cV$, $d\Phi$ is an isomorphism. Indeed, we know that $d\Phi_{|\{ 0\}\times T_{B_t}\cV(h_t)}$ is an isomorphisms from $\{ 0\}\times T_{B_t}\cV(h_t)$ to $\{ 0\}\times T_{\Phi_{h_t}(B_t)}\cT_S$ -- this is equivalent to the fact that $d\Phi_{h_t}$ is an isomorphism, as seen above. Moreover, the projection on the first factor in $\R\times T_{h_t}\cT_S$ of the image under $d\Phi$ of any vector of the form $(1, \dot B_t)$ is $1$.
   It follows that, in a suitable basis, $d\Phi$ at $(t,B_t)$ is block triangular, with one $1\times 1$ block with entry $1$, and one $(6\mathfrak{g}-6)\times (6\mathfrak{g}-6)$ which is invertible. So $d\Phi$ at $(t, B_t)$ is an isomorphism.

  Since each $\Phi_{h_t}, t\in [0,1]$ is proper, the function $\Phi$ is also proper. It is therefore a covering map. It follows that $d(h_0)=d(h_1)=1$. This applies to any metric $h_0$ on $S$ with curvature $K<-1$, and concludes the proof.
\end{proof}

\section{Application}

In this subsection, we apply Theorem \ref{thm:closed-surf} to give an alternative parametrization of the space $\mathcal{QF}_{AdS}(S)$.

It is known that the deformation space $\mathcal{QF}_{AdS}(S)$ of quasifuchsian AdS spacetimes can be parameterized in several ways, such as the Mess parameterization \cite{mess,bonsante-seppi:anti} by $\cT_S\times \cT_S$ in terms of the left and right metrics (which is closely related to Thurston's Earthquake Theorem for hyperbolic metrics on $S$ \cite{kerckhoff}), and the parametrization by $\cT_S\times \mathcal{ML}_S$ in terms of the induced metric and the bending lamination of the past (or future) boundary of the convex core \cite[Proposition 5.8]{cone}.

Furthermore, $\QF$ can also be parameterized by the cotangent bundle $T^{*}\cT_S$ of $\cT_S$, in terms of maximal spacelike surfaces in germs of AdS spacetimes (see \cite[Lemma 3.3]{minsurf}), or by $\cT_S\times \cT_S$ in terms of two hyperbolic metrics homothetic respectively to the first and third fundamental forms of past-convex constant Gaussian curvature $K$-surfaces, where $K\in(-\infty,-1)$, see \cite[Lemma 1.9]{cyclic}. Recently, Mazzoli and Viaggi have provided shear-bend coordinates for $\QF$ \cite{mazzoli-viaggi}.

Using the result of Theorem \ref{thm:closed-surf}, we give an alternative parametrization of $\QF$ below, which can be viewed as a mixed version of Mess parametrization and the parametrization in terms of the first and third fundamental forms of constant Gaussian curvature surfaces. Since the left and right metrics (resp. past-convex and future-convex spacelike surfaces, or the first and third fundamental forms) play symmetric roles in the parametrization, we only state the parametrization of $\QF$ here in terms of its left metric and the induced metric on the past-convex constant Gaussian curvature $K$-surfaces.

\begin{definition}
Let $K\in(-\infty,-1)$. We define a map $\phi_K:\QF\rightarrow\cT_S\times\cT_S$ by taking a quasifuchsian AdS spacetime $M\in\QF$ to its left metric and the hyperbolic metric homothetic to the induced metric on an embedded past-convex spacelike surface in $M$ of constant curvature $K$.
\end{definition}

The map $\phi_K$ is well-defined, by the fact given by Barbot, B\'{e}guin and Zeghib that each connected component of the complement of the convex core $C_M$ in a quasifuchsian AdS spacetime $M$ admits a unique foliation by spacelike surfaces of constant curvature $K$, with $K$ monotonic along the foliation, varying from $-1$ near the past (resp. future) boundary component of $C_M$ and $-\infty$ near the initial (resp.  final) singularity of $M$, see \cite{BBZ2}.

\begin{proposition}
For each $K\in(-\infty,-1)$, the map $\phi_K$ is a homeomorphism.
\end{proposition}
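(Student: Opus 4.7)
The plan is to deduce the proposition directly from Theorem \ref{thm:closed-surf}, using the uniqueness of constant Gaussian curvature Cauchy surfaces in quasifuchsian AdS spacetimes (the BBZ foliation result \cite{BBZ2} quoted just above) as a translation device between the two viewpoints.

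First I would establish that $\phi_K$ is a bijection. Given a target pair $(g_L, g_R) \in \cT_S \times \cT_S$, set $h = -g_R/K$. Since $g_R$ is hyperbolic (curvature $-1$) and we scale by $-1/K > 0$, the resulting smooth metric $h$ on $S$ has constant curvature $K < -1$. Theorem \ref{thm:closed-surf} then produces a unique $M \in \QF$ with left metric isotopic to $g_L$ and containing a past-convex spacelike surface $\Sigma$ with induced metric isotopic to $h$. Because $h$, and hence the induced metric on $\Sigma$, has constant curvature $K$, and because $M$ admits a unique past-convex Cauchy surface of constant curvature $K$ by the BBZ foliation, $\Sigma$ coincides up to isotopy with the canonical $K$-surface $S_K \subset M$. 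Consequently $\phi_K(M) = (g_L, g_R)$, proving surjectivity. Injectivity is similar: two preimages would yield two quasifuchsian AdS spacetimes sharing the same left metric and the same past-convex Cauchy surface (up to isotopy) with induced metric isotopic to $h = -g_R/K$, contradicting the uniqueness clause of Theorem \ref{thm:closed-surf}.

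Next I would verify the topological statement. Continuity of $\phi_K$ is straightforward: the left metric depends continuously on the holonomy of $M$, and the induced metric on $S_K \subset M$ depends continuously on $M$ via the smooth dependence of the BBZ foliation on the ambient metric (equivalently, by the implicit function theorem applied to the curvature equation $\det(B) = -(1+K)$ on the Cauchy hypersurface). To upgrade the continuous bijection $\phi_K$ to a homeomorphism, I would invoke invariance of domain: both $\QF$ and $\cT_S \times \cT_S$ are connected topological manifolds of real dimension $12\mathfrak{g}-6 - 6 = 12\mathfrak{g}-12$, so any continuous injection between them is automatically open. Alternatively, one can describe the inverse concretely as $(g_L, g_R) \mapsto \Phi_{-g_R/K}^{-1}(g_L)$ and deduce joint continuity from the homeomorphism property of $\Phi_h$ established in the proof of Theorem \ref{thm:closed-surf}.

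There is no serious obstacle here: essentially all of the analytic content (the infinitesimal rigidity via the maximum principle on $\tr(JBJ^\#(-D^\# D^\# \mu + \mu E))$ in Proposition \ref{prop:local-rigidity}, together with Tamburelli's properness) has already been absorbed into Theorem \ref{thm:closed-surf}. The only new ingredient is the compatibility of the hypothesis ``$h$ of constant curvature $K$'' with the BBZ uniqueness of constant-$K$ surfaces, which is what allows Theorem \ref{thm:closed-surf} to be repackaged as a parametrization of $\QF$ in the stated form.
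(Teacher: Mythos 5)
Your proposal is correct and follows essentially the same route as the paper: both reduce bijectivity to Theorem \ref{thm:closed-surf} applied to the rescaled metric $(1/|K|)h'$ (your $-g_R/K$), using the uniqueness of the constant-curvature $K$-surface from \cite{BBZ2} to identify that surface with the one the theorem produces, and then check continuity of the map and its inverse. Your invariance-of-domain alternative for upgrading the continuous bijection to a homeomorphism is a valid variant of the paper's direct argument for continuity of $\phi_K^{-1}$; only your label $g_R$ for the second Teichm\"uller coordinate is slightly misleading, since it is not the right metric of $M$.
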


\begin{proof}
Fix $K\in(-\infty,-1)$. For any couple $(h,h')\in\cT_S\times\cT_S$, by applying the proof of Theorem \ref{thm:closed-surf} to the manifold $\cV((1/|K|)h')$, there is a unique $M\in\QF$ which has left metric isotopic to $h$ and contains a past-convex spacelike surface with induced metric isotopic to $(1/|K|)h'$. So the map $\phi_K$ is a bijection.

As the projection to the first factor of the holonomy representation of $M$, the left metric depends continuously on the geometric structures of $M$. Note that the induced metric on a past-convex spacelike surface of constant curvature $K$ depends continuously on the AdS metric of $M$, as can be seen from the relation between the induced metrics on past-convex surfaces of constant curvature
in quasifuchsian AdS spacetimes and their left and right metrics, see \cite[section 1.7]{cyclic}.
Moreover, the space of isometric equivariant embeddings of $(S,(1/|K|)h')$ into $\AdS^3$ is identified with the space of solutions of Gauss-Codazzi equations, which depends smoothly on the initial data $h'\in\cT_S$. As a consequence, the maps $\phi_K$ and $\phi^{-1}_K$ are both continuous. Combined with the above bijection of $\phi_K$, it follows that $\phi_K$ is a homeomorphism.
\end{proof}


\bibliographystyle{alpha}
\bibliography{/home/jean-marc/Dropbox/papiers/outils/biblio}
\end{document}